\numberwithin{equation}{section}
\newtheorem{theorem}{Theorem}[section]
\newtheorem{proposition}{Proposition}[section]
\newtheorem{lemma}{Lemma}[section]
\theoremstyle{Definition}
\newtheorem{example}{Example}[section]
\def \d{\delta}
\def\R{\mathbb{R}}
\def \p{\partial}
\def \s{\sigma}
\def \e {\varepsilon}
\def \I {\mathcal I}
\def \a {\alpha}
\def \RE {\text{Re}}
\def \IM {\text{Im}}
\def \O {\Omega}
\def \L {\mathcal{L}}
\def \B {\mathcal{B}}
\DeclareMathOperator{\dist}{dist}
\title[Variational reduction for semi-stiff GL vortices]{Variational reduction for semi-stiff Ginzburg-Landau vortices}
\author{R\'emy Rodiac}
\date{}
\address[R.Rodiac]{Facultad de Matematic\'as, Pontificia Universidad Cat\'olica de Chile, Vicu\~na Mackenna 4860, Macul, Santiago Chile and Universit\'e catholique de Louvain, Institut de Recherche en Math\'e-
matique et Physique, Chemin du Cyclotron 2 bte L7.01.01, 134
8 Louvain-la-Neuve, Belgium}
\email{remy.rodiac@mat.uc.cl, remy.rodiac@uclouvain.be}
\begin{document}

\begin{abstract}
Let $\O$ be a smooth bounded domain in $\R^2$. For $\e>0$ small, we construct non-constant solutions to the Ginzburg-Landau equations $-\Delta u=\frac{1}{\e^2}(1-|u|^2)u$ in $\O$ such that on $\p \O$ u satisfies $|u|=1$ and $u\wedge \p_\nu u=0$. These boundary conditions are called semi-stiff and are intermediate between the Dirichlet and the homogeneous Neumann boundary conditions. In order to construct such solutions we use a variational reduction method very similar to the one used in \cite{delPinoKowalczykMusso2006}. We obtain the exact same result as the authors of the aforementioned article obtained for the Neumann problem. This is because the renormalized energy for the Neumann problem and for the semi-stiff problem are the same. In particular if $\O$ is simply connected a solution with degree one on the boundary always exists and if $\O$ is not simply connected then for any $k\geq 1$ a solution with $k$ vortices of degree one exists.

\smallskip
\noindent \textbf{Keywords.} Ginzburg-Landau vortices, Linearization, Finite-dimensional reduction, lack of compactness, prescribed degrees.

\smallskip
\noindent \textbf{2010 Mathematics Subject Classification.} 35J50, 35J66.
\end{abstract}

\maketitle

\section{Introduction}

Let $\O \subset \R^2$ be a smooth bounded domain. The aim of the present paper is to prove existence of solutions $u:\O \rightarrow \mathbb{C}$ of the Ginzburg-Landau (GL) equations
\begin{equation}\label{equation}
-\Delta u=\frac{1}{\e^2}(1-|u|^2)u \text{ in } \O,
\end{equation}
with the following boundary conditions

\begin{equation}\label{BC}
\left\{
\begin{array}{rcll}
|u| &=& 1  \text{ on } \p \O,\\
u\wedge \p_\nu u &=& 0 \text{ on } \p \O.
\end{array}
\right.
\end{equation}
Here $\e>0$ is a (small) parameter, $\nu$ is the outer unit normal to $\p \O$ and $\p_\nu$ denotes the normal derivative of $u$ on $\p \O$. For $a,b$ in $\mathbb{C}$, 
\begin{equation}
a\wedge b:=\frac{1}{2i}(\overline{a}b-a\overline{b})=\IM(\overline{a}b)
\end{equation} is the determinant of $a$ and $b$ viewed as vectors in $\R^2$.

This problem is called semi-stiff because it is intermediate between the Dirichlet problem, called the stiff problem in the Ginzburg-Landau literature, and the Neumann problem, refereed as the soft problem. Indeed $|u|=1$ is a Dirichlet condition for the modulus and $u\wedge \p_\nu u=0$ is a homogeneous Neumann condition for the phase. This is because, assuming that $u$ is smooth near the boundary, locally we can write $u=|u|e^{i\psi}$, where $\psi$ is a phase of $u$. A direct computation shows that $u\wedge \p_\nu u =|u|\p_\nu \psi$. Thus the second condition of \eqref{BC} leads to $\p_\nu \psi=0$ on the boundary.

Solutions of \eqref{equation}--\eqref{BC} are critical points of the energy
\begin{equation}\label{energy}
E_\e(u)=\frac12\int_\O |\nabla u|^2+\frac{1}{4\e^2} (1-|u|^2)^2,
\end{equation}
in the space
\begin{equation}
\I=\{ v\in H^1(\O,\mathbb{C}); \ |u|=1 \ \text{ on } \p \O \}.
\end{equation}
The space $\I$ is not a vector space and it is not clear if it is a Hilbert manifold. That is why we should precise the definition of critical points. We say that $u$ is a critical point of $E_\e$ in $\I$ if for all $\varphi \in C^\infty_c(\O,\R^2)$ and all $\psi \in C^\infty(\overline{\O},\R)$, setting $u_t=u+t\varphi$ and $\tilde{u_t}=ue^{it\psi}$ we have
\[ \frac{d}{dt}|_{t=0}E_\e(u_t)=0 \text{ and  } \frac{d}{dt}|_{t=0} E_\e(\tilde{u_t})=0. \]

The Ginzburg-Landau equations were extensively studied in the past decades. The Ginzburg-Landau model is used to describe the behavior of superconductor materials. These materials can be divided into two categories: superconductors of type I (which correspond to $\e$ large in the model) and superconductors of type II (here $\e$ is small). An interesting feature of type II-superconductors is the apparition of vortices (i.e.\ small regions where the material is in the normal state surrounded by rotating superconducting current). These vortices are due to the presence of a magnetic field. However, in their pioneer work \cite{BethuelBrezisHelein94}, Bethuel-Brezis-Helein observed that we can study vortices in the absence of magnetic field if we prescribe a Dirichlet boundary data with non zero topological degree on the boundary of the domain. If $\Gamma$ is a smooth, simple, connected curve, and $g$ is in $C^1(\Gamma,\mathbb{S}^1)$ then the degree (or winding number) of $g$ is defined by
\begin{equation}
\deg(g,\Gamma)=\frac{1}{2\pi}\int_{\Gamma} u\wedge \p_\tau u d\tau.
\end{equation}
This is an integer which measures the algebraic variation of the phase of $u$. Boutet-de-Monvel--Gabber observed in \cite[Appendix]{BoutetGeorgescuPurice91}, that we can extend the definition of the topological degree for maps $g$ in $H^\frac{1}{2}(\Gamma,\mathbb{S}^1)$. This can be done by approximation since smooth maps are dense in $H^\frac{1}{2}(\Gamma,\mathbb{S}^1)$ and the degree is continuous for the strong convergence in that space. Thus maps in $\I$ have a well-defined degree on every connected components of $\p \O$. Bethuel-Brezis-Helein observed that prescribing a Dirichlet boundary conditions is not physically realistic since in superconductivity theory only $|u|^2$ has a physical meaning and not $u$ ($|u|^2$ represents the density of Cooper-pairs of electrons). The degree also has a physical meaning because it describes the vorticity, i.e.\ a measure of how the superconducting currents rotates. That is why it seems natural to try to find critical points of the Ginzburg-Landau energy with modulus one on the boundary and with prescribed degrees.

For the moment we take $\O$ simply connected, then we can decompose
\[\I=\bigcup_{d\in \mathbb{N}} \I_d, \]
where
\[\I_d= \{v\in \I; \ \deg(v,\p \O)=d \}.\]
A result of Boutet-de-Monvel--Gabber \cite[Appendix]{BoutetGeorgescuPurice91} shows that the spaces $\I_d$ are the connected components of $\I$ and they are open and closed for the strong topology of $H^1$. Thus minimizing the energy $E_\e$ in an $\I_d$ would provide a critical point of $E_\e$ in $\I$ and hence a solution of \eqref{equation}--\eqref{BC}. However since the degree is not continuous for the weak $H^\frac{1}{2}$ convergence we can not apply the direct method of the calculus of variations to find minimizers. The following concentration phenomenon can occur:

\begin{example}
Let $M_n:\mathbb{D} \rightarrow \mathbb{D}$ be defined by  $M_n(z)= \frac{z-(1-1/n)}{1-(1-1/n)z}$, then $M_n \rightharpoonup -1$
weakly in $H^1$,
$\deg(M_n(z),\mathbb{S}^1)=1$   for all $n \in \mathbb{N}$ but $\deg(-1,\mathbb{S}^1)=0$.
\end{example}

Thus we might face a problem of lack of compactness. It has been shown, see e.g.\ \cite[Lemma 3.4]{BerlyandMironescuRybalkoSandier2014}, that if $\O$ is simply connected the infimum of $E_\e$ in $\I_d$, for $\e>0$ and $d\neq 0$ is not attained. In multiply connected domain the existence of minimizers depends on various factors and there is a delicate interplay between the parameter $\e$, the capacity of the domain (see e.g.\ \cite{BerlyandMironescu2006} for the definition of capacity) and the values of the prescribed degrees (see \cite{GolovatyBerlyand2002}, \cite{BerlyandMironescu2006}, \cite{BerlyandRybalkoGolovaty2006}, \cite{FM2013}, \cite{Mironescu2013}, \cite{DosSantosRodiac2016}). However for $\e$ small and for multiply connected domains, critical points with vortices exist for every values of prescribed degrees (see \cite{BerlyandRybalko2010}, \cite{DosSantos2009}, and also \cite{RodiacSandier2014}). The critical points constructed in the previous articles are stable (they are local minimizers in an appropriate function space) and their vortices are at a distance $o(\e)$ of the boundary. Thus they are near-boundary vortices and ''escape" the domain as $\e$ goes to $0$.
In simply connected domains the existence of critical points of $E_\e$ in $\I_1$ has been shown in \cite{BerlyandMironescuRybalkoSandier2014} for $\e$ large and in \cite{LamyMironescu2014} for $\e$ small. The paper \cite{BerlyandMironescuRybalkoSandier2014} rests upon a mountain-pass approach and a bubbling analysis of Palais-Smale sequences (in the spirit of \cite{BrezisCoron85}, \cite{Struwe1984}), whereas the paper \cite{LamyMironescu2014} uses singular perturbations techniques in the spirit of Pacard-Riviere \cite{PacardRiviereBirk2000}. This last approach relies on some non-degeneracy condition of the domain. One of the goals of the present paper is to get rid of this non-degeneracy condition. In order to do that we follow the approach of del Pino-Kowalczyk-Musso in \cite{delPinoKowalczykMusso2006}. Before explaining this approach we recall some results on the asymptotic behavior as $\e \rightarrow 0$ of solutions of \eqref{equation}.

In \cite{BethuelBrezisHelein94}, the authors studied  the asymptotic behavior for Dirichlet boundary data $g:\p \O \rightarrow \mathbb{S}^1$. They assumed that $\O$ is starshaped, $g$ is smooth and that $\deg(g,\p \O)=d>0$. Then they established that for a given family of solutions $u_\e$ there exist a number $k\geq 1$, and $k$-tuples
\[ \xi=(\xi_1,...,\xi_k)\in \O^k, \ \ \mathtt{d}=(d_1,d_2,...,d_k)\in \mathbb{Z}^k, \]
with $\xi_i\neq \xi_j$ and $\sum_{j=1}^k d_j=d$, such that $u_\e(x) \rightarrow w_g(x,\xi,\mathtt{d})$ along a suitable subsequence, in $C^1$-sense away from the vortices $\xi_j$, where
\begin{equation}\nonumber
w_g(x,\xi,\mathtt{d})=e^{i\varphi_g(x,\xi,\mathtt{d})} \prod_{j=1}^k \left( \frac{x-\xi_j}{|x-\xi_j|}\right)^{d_j}.
\end{equation}
Here the products are understood in complex sense and $\varphi_g=\varphi_g(x,\xi,\mathtt{d})$ is the unique solution of the problem
\begin{equation}\nonumber
\left\{
\begin{array}{rclll}
\Delta \varphi_g &=&0  &\text{ in } \O, \\
w_g(x,\xi,d)&=&g(x) & \text{ on } \p \O.
\end{array}
\right.
\end{equation}
Besides, $\xi$ must be a critical point of a \textit{renormalized energy}, $W_g(\xi,\mathtt{d})$, characterized as the limit
\begin{equation}\nonumber
W_g(\xi,\mathtt{d})=\lim_{\rho \rightarrow 0} \left[ \int_{\O \setminus \bigcup_{j=1}^k B_\rho(\xi_j)} |\nabla_x w_g|^2dx-\pi\sum_{j=1}^k d_j^2\log\frac{1}{\rho} \right].
\end{equation}
Explicit expressions in terms of Green's functions can be found in \cite{BethuelBrezisHelein94}. The renormalized energy also arises through a slightly different approach. We take $\O$ simply connected, $\xi\in \O^k$ and $\mathtt{d}\in \mathbb{Z}^k$ as before. We also set $\O_\rho=\O \setminus \bigcup_{i=1}^k\overline{B(a_i,\rho)}$ and we consider the space
\begin{equation}\nonumber
\mathcal{E}_{\rho,g}=\{v \in H^1(\O_\rho,\mathbb{S}^1); \ v=g \text{ on } \p \O \text{ and } \deg(v,\p B(\xi_i,\rho))=d_i, \text{ for } i=1,...,k\}
\end{equation}
and the minimization problem
\begin{equation}\nonumber
E_{\rho,g}= \inf\left\{ \int_{\O_\rho} |\nabla v|^2; v\in \mathcal{E}_{\rho,g} \right\}.
\end{equation}
Then we have (see \cite{BethuelBrezisHelein94}) that
\begin{equation}\nonumber
E_{\rho,g}=\pi\left(  \sum_{i=1}^k d_i^2 \right) \log \frac{1}{\rho}+W_g(\xi,\mathtt{d}) +O(\rho), \text{ as } \rho\rightarrow 0.
\end{equation}
Thus in order to derive a renormalized energy for the semi-stiff problem it is natural to set
\begin{equation}\nonumber
\mathcal{F}_\rho=\{v \in H^1(\O_\rho,\mathbb{S}^1); \deg(v,\p \O)=d \text{ and } \deg(v,\p B(\xi_i,\rho))=d_i, \text{ for } i=1,...,k\}
\end{equation}
and
\begin{equation}\nonumber
F_\rho=\inf\left\{ \int_{\O_\rho} |\nabla v|^2; v\in \mathcal{F}_{\rho} \right\}.
\end{equation}
As shown in \cite{LefterRadulescu96} a similar asymptotic expansion holds
\begin{equation}\nonumber
F_\rho=\pi\left( \sum_{i=1}^k d_i^2 \right) \log \frac{1}{\rho}+ W_N(\xi,d)+O(\rho) \text{ as } \rho \rightarrow 0.
\end{equation}
for some quantity $W_N(\xi,\mathtt{d})$. In order to give an explicit expression of $W_N$ we introduce $\hat{\Phi}_\xi$ the unique solution of
\begin{equation}\nonumber
\left\{
\begin{array}{rclll}
\Delta \hat{\Phi}_\xi &=& 2 \pi \sum_{i=1}^k d_i\delta _{\xi_i} & \text{ in }  \O, \\
\hat{\Phi}_\xi &=& 0 & \text{ on } \p \O.
\end{array}
\right.
\end{equation}
Then we have (cf.\ Theorem 2 of \cite{LefterRadulescu96})

\begin{equation}\nonumber
W_N(\xi,\mathtt{d})=-\pi \sum_{i\neq j} d_id_j \log|\xi_i-\xi_j|-\pi \sum_{j=1}^k d_j \hat{R}_\xi(\xi_j),
\end{equation}
with
\begin{equation}\nonumber
\hat{R}_\xi(x)=\hat{\Phi}_\xi(x)-\sum_{j=1}^kd_j\log|x-\xi_j|.
\end{equation}

The asymptotic behavior of solutions of \eqref{equation}--\eqref{BC} was not studied because the existence of solutions was not clear. However Lefter-Radulescu studied the asymptotic behavior of solutions of \eqref{equation} in a subclass of $\I_d$ for every $d\neq 0$. For $A>0$ they set
\begin{equation}\nonumber
\I_{d,A}=\{v\in \I_d; \int_{\p \O} |\p_\tau u|^2 \leq A \}.
\end{equation}
For $A>0$ large enough this set is not empty and minimizers of $E_\e$ in $\I_{d,A}$ exist. They found a similar asymptotic behavior of these minimizers as in \cite{BethuelBrezisHelein94} but with the new renormalized energy $W_N(\xi,d)$. Note that the minimizers found in \cite{LefterRadulescu96} are not solutions of \eqref{BC}, because it is not true that $\tilde{u_t}=ue^{it\psi}$ belongs to $\I_{d,A}$ for $t$ small and $\psi$ in $C^\infty(\overline{\O},\R)$.

Asymptotic behavior of solutions of GL equations with homogeneous Neumann boundary conditions were also studied in \cite{Serfaty2005} (see also \cite{Spirn2003}). A renormalized energy was derived and we want to emphasize that it is the same as in the paper of Lefter-Radulescu. The heuristic reason for that is that when $\e$ goes to $0$  solutions of the GL equations tends to singular $\mathbb{S}^1$-valued harmonic maps. We expect solutions of the GL equations with Neumann boundary conditions to tend to  $\mathbb{S}^1$-valued harmonic maps with Neumann boundary conditions. However for $u$ an $\mathbb{S}^1$-valued map a homogeneous Neumann boundary condition for the map translates in a homogeneous Neumann boundary condition for the phase, that is $u\wedge \p_\nu u=0$ on the boundary. Hence formally, for solutions with Neumann boundary conditions we find a limit with semi-stiff boundary condition, and that same limit is expected for limit of solutions with semi-stiff boundary conditions.

The notion of renormalized energy is also useful to construct solutions of GL equations with various boundary conditions. In \cite{PacardRiviereBirk2000} the authors constructed such solutions with Dirichlet boundary conditions and with vortices which converge to non-degenerate critical points of the corresponding renormalized energy. In \cite{delPinoKowalczykMusso2006}, the authors constructed solutions of \eqref{equation} both with Dirichlet and Neumann boundary condition and without using any non-degeneracy asumption. Since the semi-stiff boundary condition are intermediate between Dirichlet and Neumann, and since the renormalized energy for this problem is the same as in the Neumann case, it is natural to try to adapt the method of \cite{delPinoKowalczykMusso2006} to construct solutions of \eqref{equation}--\eqref{BC}. This is the goal of this paper.

In order to state our main results we rewrite in a slightly different form the renormalized energy in the semi-stiff case (and that is the same as in the Neumann case). For $\xi,\mathtt{d}$ as before we let
\begin{equation}\nonumber
w_N(x,\xi,\mathtt{d})=e^{i\varphi_N(x,\xi,\mathtt{d})} \prod_{j=1}^k \left( \frac{x-\xi_j}{|x-\xi_j|} \right)^{d_j},
\end{equation}
with $\varphi_N(x,\xi,\mathtt{d})$ the unique solution of the following problem:
\begin{equation}\label{varphiprblem}
\Delta \varphi_N = 0 \ \ \text{ in } \O,
\end{equation}
\begin{equation}\label{varphiproblem2}
\p_\nu \varphi_N=-\sum_{j=1}^k d_j \frac{(x-\xi_j)^\perp\cdot \nu}{|x-\xi_j|^2} \ \ \text{ on } \p \O, \ \ \int_\O \varphi_N=0.
\end{equation}

We have let $x^\perp=(-x_2,x_1)$. We can check that, if $\O$ is simply connected, then (cf.\ \cite{delPinoKowalczykMusso2006})
\begin{equation}\label{Renormalizedenergy}
W_N(\xi,\mathtt{d})= \lim_{\rho \rightarrow 0} \left[ \int_{ \O \setminus \bigcup_{j=1}^k B_\rho(\xi_j)} |\nabla_x w_N|^2-\pi \sum_{j=1}^k d_j^2 \log \frac {1}{\rho} \right].
\end{equation}
If $\O$ is multiply-connected, then we take \eqref{Renormalizedenergy} as a definition of the renormalized energy. We also introduce the standard single vortex solutions $w_{\pm}$ of respective degrees $+1$ and $-1$. These are solutions of
\begin{equation}\nonumber
\Delta w+(1-|w|^2)w=0 \ \text{ in } \R^2,
\end{equation}
which have the form
\begin{equation}\nonumber
w_+(x)=U(r)e^{i\theta}, \ \ \ w_-(x)=U(r)e^{-i\theta},
\end{equation}
where $(r,\theta)$ denote the polar coordinates ($x_1=r\cos \theta$, $x_2=r\sin \theta$) and $U(r)$ is the unique solution of the problem
\begin{equation}\label{equationradiale}
\left\{
\begin{array}{rcll}
U''+\frac{U'}{r}-\frac{U}{r^2}+(1-|U|^2)U&=&0 \ \ \text{ in } (0,+\infty), \\
U(0)=0, \ \ \ U(+\infty)=1. & &
\end{array}
\right.
\end{equation}
It is known, see e.g.\  \cite{ChenElliottQi1994}, that $U'(0)>0$,
\begin{equation}\label{estimeessurlesderivees}
U(r)=1-\frac{1}{2r^2}+O(\frac{1}{r^4}), \ \  U'(r)=\frac{1}{r^3}+O(\frac{1}{r^4})  \text{ as } r\rightarrow +\infty.
\end{equation}
Besides by using the equation \eqref{equationradiale} and \eqref{estimeessurlesderivees} we also have 
\begin{equation}\label{estimeesurladeriveeseconde}
U''(r)=O(\frac{1}{r^4}) \text{ as } r\rightarrow +\infty.
\end{equation}
Let us fix a number $k\geq 1$, and sets $I_{\pm}$ with
\begin{equation}\nonumber
I_-\cup I_+ =\{1,...,k\}, \ \ \ I_+\cap I_-=\emptyset.
\end{equation}
Let $\xi=(\xi_1,...,\xi_k)$ be a $k$-tuple of distinct points of $\O$, and
\begin{equation}\nonumber
\mathtt{d} \in \{ -1,1\}^k, \ \ d_j=\pm 1 \ \text{ if } j\in I_{\pm}.
\end{equation}
As an approximation of a solution of \eqref{equation}--\eqref{BC} we consider
\begin{equation}\label{approximation}
w_{N\e}(x,\xi,\mathtt{d})=e^{i\varphi_N(x,\xi,\mathtt{d})} \prod_{j\in I_+}w_+\left( \frac{x-\xi_j}{\e}\right) \prod_{j\in I_{-}} w_{-}\left( \frac{x-\xi_j}{\e}\right),
\end{equation}
where the products are understood to be equal to one if $I_-$ of $I_+$ are empty. We note that the approximation is the same as the one used for the Neumann problem in \cite{delPinoKowalczykMusso2006}. In this article we build solutions of \eqref{equation}--\eqref{BC} which are close (uniformly) to $w_{N\e}$. For that we use a technique called \emph{variational reduction} (or Lyapounov-Schmidt approach) which consists in building a finite-dimensional manifold of approximate solutions such that critical points of $E_\e$  constrained to this manifold correspond to actual solutions of \eqref{equation}--\eqref{BC}. In our case that manifold is parametrized by the locations of the vortices of approximate solutions.

We say that $W_N(\cdot,\mathtt{d})$ satisfies a non-trivial critical point situation in $\mathcal{D}$, open and bounded subset of $\O^k$ with
\[ \overline{\mathcal{D}}\subset \{ \xi \in \O^k ; \xi_i\neq \xi_j, \ \text{ if } i\neq j \},\]
if there exists a $\delta>0$ such that for any $h$ in $C^1(\overline{\mathcal{D}})$ with $\|h\|_{C^1(\overline{\mathcal{D}})}<\delta$, a critical point for $W_N+h$ in $\mathcal{D}$ exists. As in \cite{delPinoKowalczykMusso2006} we have

\begin{theorem}\label{theorem1}
Assume that $W_N$ exhibits a non-trivial critical point situation in $\mathcal{D}$. Then there exists a solution $u_\e$ to the semi-stiff problem \eqref{equation}--\eqref{BC} such that
\begin{equation}\label{1.21}
u_\e(x)=w_{N\e}(x,\xi_\e,\mathtt{d})+o(1),
\end{equation}
where $o(1)\rightarrow 0$ uniformly in $\O$ and
\begin{equation}\label{1.22}
\xi_\e \in \mathcal{D}, \ \ \ \ \nabla_\xi W_{N}(\xi_\e,\mathtt{d})\rightarrow 0.
\end{equation}
\end{theorem}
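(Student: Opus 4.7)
The plan is to carry out a Lyapunov--Schmidt / finite-dimensional reduction adapted from \cite{delPinoKowalczykMusso2006} to the semi-stiff setting. I would look for solutions of the form $u_\e = w_{N\e}(\cdot,\xi,\mathtt{d}) + \phi$ with $\xi \in \mathcal{D}$ to be determined and $\phi$ small. Substituting into \eqref{equation} recasts the problem as
\[ L_\e(\phi) = -R_\e + N_\e(\phi), \]
where $L_\e$ is the linearization of the GL operator at $w_{N\e}$, $R_\e = -\Delta w_{N\e}-\e^{-2}(1-|w_{N\e}|^2)w_{N\e}$ is the residual of the ansatz, and $N_\e$ gathers the terms quadratic and higher in $\phi$. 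Because of the semi-stiff constraint, $\phi$ is required to lie in a linearized tangent space along $w_{N\e}$, namely $\RE(\overline{w_{N\e}}\phi)=0$ in $\O$ (modulus constraint) and $\phi \wedge \p_\nu w_{N\e} + w_{N\e}\wedge \p_\nu \phi = 0$ on $\p \O$ (linearized phase-Neumann constraint).

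The first step is to bound $R_\e$ in a weighted norm $\|\cdot\|_{**}$ adapted to the vortex scale $\e$. Using the decay \eqref{estimeessurlesderivees}--\eqref{estimeesurladeriveeseconde} and the fact that $\varphi_N$ was chosen precisely so that \eqref{varphiprblem}--\eqref{varphiproblem2} cancel the leading-order boundary contribution of the product in \eqref{approximation}, one obtains $\|R_\e\|_{**}=O(\e)$, together with an exponentially small residual on the semi-stiff boundary condition. The second and most delicate step is the linear theory for the projected problem
\[ L_\e(\phi) = f + \sum_{i=1}^k \sum_{\ell=1}^2 c_{i,\ell}\, Z_{i,\ell}, \]
where the $Z_{i,\ell}$ are cut-off versions of the approximate kernel modes $\p_{\xi_i^{(\ell)}} w_{N\e}$ coming from translations of each vortex, and $\phi$ is imposed to be orthogonal to them. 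The aim is a uniform estimate $\|\phi\|_* \leq C\|f\|_{**}$ with $C$ independent of $\e$. The standard route is a blow-up/contradiction argument: a failing sequence rescaled around a vortex produces a bounded nontrivial solution of the linearized equation at $w_\pm$ on $\R^2$ in the orthogonal complement of the translation modes, and non-degeneracy of the standard degree-$\pm 1$ vortex (Mironescu, del Pino--Felmer--Kowalczyk) forces it to vanish; away from vortices one is reduced to the linearization of the $\mathbb{S}^1$-valued canonical harmonic map problem on $\O$. The main obstacle, and the principal departure from \cite{delPinoKowalczykMusso2006}, is precisely this boundary analysis: the Neumann problem for the vector field $u$ is replaced by the mixed semi-stiff problem, and one must check that the limiting linear boundary operator still has the same Fredholm and non-degeneracy structure. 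The identity $u\wedge \p_\nu u = |u|\p_\nu \psi$ shows that, after a phase-modulus decomposition, the linearized boundary condition on the phase is precisely homogeneous Neumann, which is why the linear theory of \cite{delPinoKowalczykMusso2006} transfers to the semi-stiff case with only cosmetic changes.

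Once the linear theory is established, a standard contraction mapping on a ball $\{\|\phi\|_*\leq C\e\}$ produces a unique $\phi = \phi(\xi) \in C^1$ solving the projected nonlinear problem. Setting $\tilde E_\e(\xi) := E_\e(w_{N\e}(\cdot,\xi,\mathtt{d})+\phi(\xi))$, a direct differentiation shows that the multipliers $c_{i,\ell}(\xi)$ vanish precisely at critical points of $\tilde E_\e$ on $\mathcal{D}$. A careful energy expansion around the ansatz then yields
\[ \tilde E_\e(\xi) = \pi k |\log \e| + k\gamma + W_N(\xi,\mathtt{d}) + \theta_\e(\xi), \]
in $C^1(\overline{\mathcal{D}})$, where $\gamma$ is a universal core constant and $\|\theta_\e\|_{C^1(\overline{\mathcal{D}})}\to 0$ as $\e\to 0$. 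Applying the non-trivial critical point hypothesis with $h=\theta_\e$ for $\e$ small then yields $\xi_\e \in \mathcal{D}$ with $\nabla_\xi \tilde E_\e(\xi_\e)=0$, producing a genuine solution $u_\e = w_{N\e}(\cdot,\xi_\e,\mathtt{d})+\phi(\xi_\e)$ of \eqref{equation}--\eqref{BC}. The weighted bound $\|\phi\|_*=O(\e)$ combined with elliptic regularity delivers the uniform closeness \eqref{1.21}, while $\nabla_\xi W_N(\xi_\e,\mathtt{d}) = -\nabla_\xi \theta_\e(\xi_\e) \to 0$ gives \eqref{1.22}.
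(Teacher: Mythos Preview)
Your overall architecture is the same as the paper's: Lyapunov--Schmidt reduction in the spirit of \cite{delPinoKowalczykMusso2006}, error estimate for the approximation, projected linear theory via blow-up and nondegeneracy of the standard vortex, contraction for the projected nonlinear problem, and a $C^1$-expansion of the reduced energy around $W_N$. Where you diverge from the paper is in the ansatz and, consequently, in the handling of the boundary conditions, and this is where your sketch has genuine imprecisions.

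The paper does \emph{not} use the purely additive perturbation $u_\e=w_{N\e}+\phi$. It uses the mixed ansatz
\[
v=\eta\,(V_0+iV_0\psi)+(1-\eta)\,V_0e^{i\psi},
\]
additive near the vortices and \emph{multiplicative} away from them; in particular $v=V_0e^{i\psi}$ in a neighborhood of $\partial\Omega_\e$. This is not cosmetic: with the multiplicative form the two nonlinear boundary conditions decouple exactly into
\[
\psi_2=\log|V_0|\quad\text{(Dirichlet, with data of size $O(\e^2)$)},\qquad \partial_\nu\psi_1=0\quad\text{(homogeneous Neumann)},
\]
the second being exactly homogeneous because the approximation satisfies $V_0\wedge\partial_\nu V_0=0$ on $\partial\Omega_\e$ (this is precisely what the choice of $\varphi_N$ in \eqref{varphiprblem}--\eqref{varphiproblem2} buys). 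With your additive ansatz you have to solve the \emph{nonlinear} constraint $|w_{N\e}+\phi|=1$ on $\partial\Omega$, not its linearization; writing ``$\RE(\overline{w_{N\e}}\phi)=0$ in $\Omega$'' is doubly off --- the constraint is on $\partial\Omega$, not in $\Omega$, and linearizing it is not enough to land in the space $\mathcal I$. The boundary residual is also not ``exponentially small'': $\log|V_0|=O(\e^2)$ on $\partial\Omega_\e$, polynomially.

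Two further quantitative points. First, the projected linear estimate in this setting carries an unavoidable $|\log\e|$ loss,
\[
\|\psi\|_*\le C\big(|\log\e|\,\|h\|_{**}+\|g\|_{***}\big),
\]
coming from the radial mode of $\psi_1$ (a one-dimensional ODE on $(0,\delta/\e)$ whose explicit solution picks up a logarithm). Your claimed uniform bound $\|\phi\|_*\le C\|f\|_{**}$ would not survive the argument. Correspondingly the fixed point lives in a ball of size $C\e^{1-\sigma}$, not $C\e$, and the reduced functional satisfies $\nabla_\xi P_\e(\xi)=\nabla_\xi W_N(\xi,\mathtt d)+O(\e^{1-\sigma}\log\e)$; this is still enough to invoke the nontrivial critical point hypothesis. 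Second, because $\psi_1$ is governed by a Neumann problem, a compatibility constant $c_0$ and a zero-mean condition on $\psi_1$ must be introduced in the projected problem; showing $c_0=0$ once the $c_{jl}$ vanish is part of the reduction and is absent from your outline.
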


A family of solutions $u_\e$ of \eqref{equation}--\eqref{BC} with properties \eqref{1.21} and \eqref{1.22} in some set $\mathcal{D}$ compactly contained in $\{\xi \in \O^k; \xi_i \neq \xi_j, \ \text{ if } i\neq j \}$, is refereed as a $k$-vortex solution with degrees $\mathtt{d}$. We can deduce from the previous theorem the following result:

\begin{theorem}\label{theorem2}
For the semi-stiff problem \eqref{equation}--\eqref{BC}, for $\e$ small enough the following facts hold:
\begin{itemize}
\item[a)] A $1$-vortex solution with degree $1$ always exists. In particular for simply connected domains a solution of \eqref{equation}--\eqref{BC} in $\I_1$ always exists.
\item[b)] Two $2$-vortex solutions with degrees $(+1,-1)$, always exist. In particular for simply connected domain $\O$ there exist at least two solutions of \eqref{equation}--\eqref{BC} in $\I_0$.
\item[c)] Assume that $\O$ is not simply connected. Then, given any $m\geq 1$, there exists an $m$-vortex solution with degrees $(1,...,1)\in \mathbb{Z}^m$.
\end{itemize}
\end{theorem}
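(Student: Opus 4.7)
The plan is to apply Theorem~\ref{theorem1} in each of the three cases: once a non-trivial critical point situation for $W_N(\cdot,\mathtt{d})$ is produced on a suitable open bounded $\mathcal{D}\subset\O^k\setminus\{\xi_i=\xi_j\}$, the reduction in Theorem~\ref{theorem1} immediately yields a solution of \eqref{equation}--\eqref{BC}. Since $W_N$ coincides with the Neumann renormalized energy of \cite{delPinoKowalczykMusso2006}, the underlying critical point analyses are the ones of that paper; my role is to outline each of them.

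For part (a), with $k=1$ and $d_1=1$, I would first write $\hat\Phi_\xi = 2\pi G(\cdot,\xi)$ where $G$ is the Dirichlet Green function of $\O$, so that
\[
W_N(\xi,1) = -\pi \hat R_\xi(\xi) = -2\pi^2 h(\xi,\xi),
\]
with $h$ the associated Robin function. Since $h(\xi,\xi)\to+\infty$ as $\xi\to\p\O$, $W_N\to-\infty$ at the boundary and attains a strict maximum at some interior $\xi^*\in\O$. Taking $\mathcal{D}=\{W_N>W_N(\xi^*)-\eta\}$ for small $\eta>0$ makes this maximum topologically stable: for any $h\in C^1(\overline{\mathcal{D}})$ with $\|h\|_{C^1}$ small enough, $W_N+h$ still attains its supremum on $\overline{\mathcal{D}}$ in the interior of $\mathcal{D}$, where the gradient necessarily vanishes.

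For part (b), with $\mathtt{d}=(+1,-1)$, the analogous computation using $\hat\Phi_\xi = 2\pi[G(\cdot,\xi_1)-G(\cdot,\xi_2)]$ gives
\[
W_N(\xi_1,\xi_2) = -2\pi^2 h(\xi_1,\xi_1)-2\pi^2 h(\xi_2,\xi_2)+2\pi\log|\xi_1-\xi_2|+4\pi^2 h(\xi_1,\xi_2),
\]
which tends to $-\infty$ both as $\xi_1\to\xi_2$ (by the logarithmic term) and as either vortex approaches $\p\O$ (through the Robin contribution). Hence $W_N$ attains an interior maximum at some $(\xi_1^*,\xi_2^*)$. The involution $(\xi_1,\xi_2)\mapsto(\xi_2,\xi_1)$ is a fixed-point-free symmetry of $W_N$ on $\O^2\setminus\{\xi_1=\xi_2\}$, so $(\xi_2^*,\xi_1^*)$ is a second, distinct interior maximum; disjoint small neighborhoods of the two points provide two independent non-trivial critical point situations, and Theorem~\ref{theorem1} delivers the two claimed solutions.

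For part (c), $\O$ multiply connected and $\mathtt{d}=(1,\dots,1)$, the pairwise interaction becomes repulsive: $W_N\to+\infty$ as any two vortices collide, while $W_N\to-\infty$ as a vortex approaches $\p\O$ (either the outer component or one of the holes). No absolute extremum exists, so one must produce a saddle-type critical point by a topological min-max. I would restrict to $\mathcal{D}_{\delta,\delta_0}=\{\xi:\dist(\xi_i,\p\O)\geq\delta_0,\ |\xi_i-\xi_j|\geq\delta\}$ with $\delta,\delta_0$ chosen small so that $W_N$ is very large on $\{|\xi_i-\xi_j|=\delta\}$ and very negative on $\{\dist(\xi_i,\p\O)=\delta_0\}$. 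The essential role of multiple connectedness is that the presence of holes gives the reduced configuration space non-trivial topology, so that admissible deformations cannot retract one stratum onto the other; a linking (Lyusternik--Schnirelmann type) argument then produces a critical level which is $C^1$-stable. This last step is where I expect the main obstacle to lie: unlike in (a) and (b), the critical point is not an absolute extremum and its persistence under $C^1$-small perturbations of $W_N$ has to be established from the topological nature of the min-max, following the arguments of \cite{delPinoKowalczykMusso2006}.
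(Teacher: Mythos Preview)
Your proposal is correct and follows exactly the route the paper takes: the paper reduces Theorem~\ref{theorem2} entirely to Theorem~\ref{theorem1} and then defers the verification of the non-trivial critical point situations for $W_N$ to \cite{delPinoKowalczykMusso2006}, since $W_N$ is literally the Neumann renormalized energy treated there. Your outlines for (a) and (b)---interior maximum of the Robin-type function, and the swap symmetry producing two distinct maximizers in configuration space---are the arguments of \cite{delPinoKowalczykMusso2006}, and your diagnosis of (c) (repulsion between equal-sign vortices, attraction to $\partial\Omega$, hence a topologically protected min-max exploiting the non-trivial homotopy of the multiply connected domain) is precisely the mechanism used there; the paper adds nothing beyond the citation.
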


Point a) improves a previous result of Lamy-Mironescu \cite{LamyMironescu2014} on the existence of critical points of $E_\e$ in $\I_1$ in \emph{almost every} simply connected domain, for $\e$ small enough. Indeed they proved the existence of critical points of $E_\e$ with prescribed degrees under some non-degeneracy assumptions of the domain and then, they proved that in the case $d=1$ almost every domain satisfy these assumptions. Since we do not require any non-degeneracy assumption we are able to state the same result for \emph{every} simply connected domain. Point b) shows the existence of non-trivial solutions in $\I_0$  for simply connected domains. To our knowledge these solutions are new in the literature. Point c) provides us with the existence of new solutions with prescribed degrees in multiply connected domains. These solutions are different from the ones constructed by Berlyand-Rybalko and Dos Santos since their solutions have vortices at a distance $o(\e)\rightarrow 0$ of the boundary of the domain whereas our solutions have vortices well-inside the domain (at a distance of the boundary greater than $\delta>0$ for a fixed $\delta$ independent of $\e$).

The organization of the paper follows closely the one of \cite{delPinoKowalczykMusso2006}. We first compute the error estimate of our first approximation $w_{N\e}$. Then we give the ansatz, i.e.\ the form of the perturbation of the approximation under which we look for solutions of \eqref{equation}--\eqref{BC}. This allows us to give a new formulation of the problem. We then study a suitably projected problem (its linear and nonlinear version). From that study we deduce that critical points of $E_\e$ in $\I$ correspond to critical points of an approximation of the renormalized energy. At last we conclude the proof of Theorems \ref{theorem1} and \ref{theorem2} as in \cite{delPinoKowalczykMusso2006}.

\section{First approximation and error estimates}

Let us fix a number $k\geq1$, a $k$-tuple $\mathtt{d}\in\{-1,1\}^k$, a small number $\delta>0$ and $\xi \in \O_\delta^k$ with
\begin{equation}\label{defOmegadelta}
\O_\delta^k=\{ \xi \in \O^k, |\xi_i-\xi_j|>2\delta \text{ for all } i\neq j, \ \dist(\xi,\p\O)>2\delta \}.
\end{equation}
Let $I_{\pm}$ be the respective sets of indices associated to $\pm 1$ in $\mathtt{d}$. We rewrite our approximation $w_{N\e}$ given by \eqref{approximation}. For that we note that the solution $\varphi_N$ to the problem \eqref{varphiprblem}--\eqref{varphiproblem2} can be decomposed as
\begin{equation}\nonumber
\varphi_N(x)=\sum_{j=1}^k d_j\varphi_j^*(x),
\end{equation}
where
\begin{equation}\nonumber
\Delta \varphi_j^*=0 \ \text{ in } \O,
\end{equation}
\begin{equation}\label{varphiBC}
\p_\nu \varphi_j^*=-\frac{(x-\xi_j)^\perp\cdot \nu}{|x-\xi_j|^2} \ \text{ on } \p \O, \ \ \ \int_\O \varphi_j^*=0.
\end{equation}
We observe that if $\theta(x-\xi_j)$ denotes the polar argument around the point $\xi_j$ then we have precisely
\[ \nabla \theta_j(x)=\frac{(x-\xi_j)^\perp}{|x-\xi_j|^2}.\]
We can thus rewrite $w_{N\e}$ as
\[w_{N\e}(x)=U_0(x)\prod_{j\in I_+} e^{i(\theta_j(x)+\varphi_j^*(x))} \prod_{j\in I_-} e^{-i(\theta_j(x)+\varphi_j^*(x))},\]
with
\[ U_0(x)=\prod_{j\in I_+\cup I_-} U\left( \frac{|x-\xi_j|}{\e}\right). \]

We choose $w_{N\e}$ as a first approximation for a solution of the problem \eqref{equation}--\eqref{BC} (Note that this is the same approximation as the one used for the Neumann problem in \cite{delPinoKowalczykMusso2006}). We let $\O_\e=\e^{-1}\O$. For $u$ defined in $\O$ we write $v(y)=u(\e y)$ for $y\in \O_\e$. Then $u$ is a solution of \eqref{equation}--\eqref{BC} if and only if $v$ is a solution of
\begin{equation}\label{equationnewform}
\left\{
\begin{array}{rcll}
\Delta v+(1-|v|^2)v=0 & \text{ in } \O_\e, \\
|v|=1 & \text{ on } \p \O_\e, \\
v\wedge \p_\nu v=0 & \text{ on } \p \O_\e.
\end{array}
\right.
\end{equation}

In the sequel we let
\[V_0(y):=w_{N\e}(\e y), \ \ \ \ \xi_j':=\frac{\xi_j}{\e} \ \text{ and } \tilde{\varphi}_j^*(y):=\varphi_j^*(\e y). \]
We define the errors of $V_0$ with respect to the equations \eqref{equationnewform} as
\begin{equation}\label{error1}
E:=\Delta V_0+(1-|V_0|^2)V_0
\end{equation}
\begin{equation}\label{error2}
F:= V_0 \wedge \p_\nu V_0 \text{ on } \p \O_\e
\end{equation}
\begin{equation}\label{error3}
G:=\log |V_0| \text{ on } \p \O_\e.
\end{equation}
We then have
\begin{lemma}\label{Lemma2.1}
\begin{itemize}
\item[1)] There exists a constant $C$, depending on $\delta$ and $\O$ such that for all small $\e$ and all points $\xi \in \O_\delta^k$ we have
    \begin{equation}
    \sum_{j=1}^k \|E\|_{C^1(|y-\xi_j'|<3)} \leq C\e.
    \end{equation}
    Moreover, we have that $E=iV_0(y)[R_1+iR_2]$ with $R_1, R_2$ real-valued and
    \begin{equation}
    |R_1(y)|\leq C\e \sum_{j=1}^k \frac{1}{|y-\xi_j'|^3}, \ \ \ \ |R_2(y)|\leq C\e \sum_{j=1}^k \frac{1}{|y-\xi_j'|}
    \end{equation}
    if $|y-\xi_j'|>1$ for all $j$.
\item[2)] Besides we have
\begin{eqnarray}
F=V_0\wedge \p_\nu V_0 =0 \ \text{ on } \p \O_\e, \\
 \|G\|_\infty+\e^{-1}\|\nabla G\|_\infty+\e^{-2}\| D^2G \|_{L^\infty}=O(\e^2) \text{ on } \p \O_\e.
\end{eqnarray}

\end{itemize}
\end{lemma}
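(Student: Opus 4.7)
The strategy is to exploit two complementary representations of $V_0$. Near each vortex centre $\xi_j'$ I factor
\begin{equation*}
V_0(y)=w_{d_j}(y-\xi_j')\,S_j(y),\qquad S_j(y):=\prod_{i\neq j}w_{d_i}(y-\xi_i')\cdot e^{i\tilde\varphi(y)},
\end{equation*}
where $\tilde\varphi=\varphi_N(\e\,\cdot)$. Away from all vortices I use the polar form $V_0=U_0\,e^{i\Psi}$ with $\Psi(y)=\sum_j d_j\theta(y-\xi_j')+\tilde\varphi(y)$; since each $\theta(y-\xi_j')$ and $\tilde\varphi$ is harmonic away from the $\xi_j'$, so is $\Psi$.

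Part~2 comes out cleanly. The assumption $\xi\in\O_\delta^k$ forces $|y-\xi_j'|\geq 2\delta/\e$ on $\p\O_\e$, so the polar form applies at the boundary. The phase/modulus computation gives $V_0\wedge\p_\nu V_0=|V_0|^2\p_\nu\Psi$, and the defining condition \eqref{varphiBC} of $\varphi_j^*$ is precisely what is required to make $\p_\nu\theta(y-\xi_j')+\p_\nu\tilde\varphi_j^*=0$ on $\p\O_\e$, hence $\p_\nu\Psi=0$ and $F\equiv 0$. For $G=\sum_j\log U(|y-\xi_j'|)$, the expansions \eqref{estimeessurlesderivees}--\eqref{estimeesurladeriveeseconde} give $\log U(r)=-1/(2r^2)+O(1/r^4)$ with $O(1/r^3)$ and $O(1/r^4)$ bounds on the first two derivatives; evaluating on the boundary where $|y-\xi_j'|\geq 2\delta/\e$ delivers the three claimed powers of $\e$.

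For the first bullet of Part~1, on $|y-\xi_j'|<3$ the other centres satisfy $|y-\xi_i'|\geq 2\delta/\e-3$, so $S_j$ is smooth with $|S_j|=1+O(\e^2)$, $\nabla S_j=O(\e)$ and $\Delta S_j=O(\e^2)$: each derivative of a faraway profile $w_{d_i}(y-\xi_i')$ or of $\tilde\varphi(y)=\varphi_N(\e y)$ picks up a factor $\e$ by the chain rule. Using $\Delta w_{d_j}+(1-|w_{d_j}|^2)w_{d_j}=0$ together with Leibniz,
\begin{equation*}
E=2\nabla w_{d_j}\cdot\nabla S_j+w_{d_j}\Delta S_j+|w_{d_j}|^2(1-|S_j|^2)w_{d_j}S_j,
\end{equation*}
and each summand is $O(\e)$ on the ball, the leading contribution being $2\nabla w_{d_j}\cdot\nabla S_j$. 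The same analysis applied to one more derivative yields the claimed $C^1$ bound.

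For the second bullet, in the region $|y-\xi_j'|>1$ the polar form gives (using $\Delta\Psi=0$)
\begin{equation*}
\frac{E}{V_0}=\Bigl[\frac{\Delta U_0}{U_0}-|\nabla\Psi|^2+(1-U_0^2)\Bigr]+2i\,\frac{\nabla U_0\cdot\nabla\Psi}{U_0},
\end{equation*}
so that $R_1=2(\nabla U_0\cdot\nabla\Psi)/U_0$ and $R_2=|\nabla\Psi|^2-\Delta U_0/U_0-(1-U_0^2)$. Two pointwise identities reduce everything to controllable remainders: the orthogonality $\nabla U(|y-\xi_j'|)\cdot\nabla\theta(y-\xi_j')=0$ (radial against tangential) annihilates the $i=j$ contributions in $R_1$; and the radial equation \eqref{equationradiale}, rewritten as $\Delta U_j/U_j=1/|y-\xi_j'|^2-(1-U_j^2)$, makes the diagonal of $R_2$ cancel against the diagonal of $|\sum_j d_j\nabla\theta_j|^2$ and $\sum_j(1-U_j^2)$. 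What remains splits into (i) terms involving $\nabla\tilde\varphi=O(\e)$, which supply the factor $\e$ directly, and (ii) genuine two-centre cross terms. The only real obstacle is the bookkeeping for (ii): the constraint $|\xi_i'-\xi_j'|\geq 2\delta/\e$ forces $\max(r_i,r_j)\geq\delta/\e$ for every pair $i\neq j$, and exploiting this (using e.g.\ $|(y-\xi_i')\cdot(y-\xi_j')^\perp|\leq|\xi_i'-\xi_j'|\,r_j$ together with $U'/U=O(1/r^3)$) produces the missing factor $\e$ from the larger radial variable in each cross term. A short case analysis then converts all remainders into the prescribed bounds $O(\e\sum_k 1/r_k^3)$ for $R_1$ and $O(\e\sum_k 1/r_k)$ for $R_2$.
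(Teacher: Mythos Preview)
Your argument is correct and, for Part~2, essentially identical to the paper's proof: write $V_0$ in polar form near the boundary, use \eqref{varphiBC} to make $\p_\nu\Psi$ vanish, and read off the estimates on $G=\sum_j\log U(r_j)$ from \eqref{estimeessurlesderivees}--\eqref{estimeesurladeriveeseconde}. For Part~1 the paper does not give a proof at all but simply cites \cite[Lemma~2.1]{delPinoKowalczykMusso2006}; your self-contained sketch (factor $V_0=w_{d_j}S_j$ near each vortex, and in the far region use the polar form together with the radial/tangential orthogonality and the ODE \eqref{equationradiale} to isolate the cross terms) is precisely the argument given in that reference, so nothing is lost.

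One minor wording issue: when you say that ``each derivative of a faraway profile $w_{d_i}(y-\xi_i')$ \dots\ picks up a factor $\e$ by the chain rule'', that is not the mechanism. There is no $\e$ in the argument of $w_{d_i}$; the factor $\e$ comes instead from the decay $|\nabla w_\pm(z)|=O(1/|z|)$ together with $|y-\xi_i'|\geq 2\delta/\e-3$. Only the derivatives of $\tilde\varphi(y)=\varphi_N(\e y)$ gain the factor $\e$ via the chain rule.
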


\begin{proof}
The first point is proved in \cite[Lemma 2.1]{delPinoKowalczykMusso2006}, we only prove the second point. We have that
\begin{eqnarray}\label{gradientV0}
\nabla V_0(y)&=&V_0(y)\left\{\sum_{j\in I_+\cup I_-} \frac{\nabla U(|y-\xi_j'|)}{U(|y-\xi_j'|)} \right. \\
& & \phantom{aaa} \left. + i\left[ \sum_{j\in I_+} (\nabla \theta_j(y)+\nabla \tilde{\varphi}_j(y))- \sum_{j\in I_-} (\nabla \theta_j(y)+\nabla \tilde{\varphi}_j(y)) \right] \right\},\nonumber
\end{eqnarray}
where with a slight abuse of notation we have called $\theta_j(y)=\theta(y-\xi_j')$. Thus
\begin{eqnarray}
\p_\nu V_0(y)&=&\nabla V_0(y)\cdot \nu \nonumber \\
&=& V_0(y) \left\{ \sum_{j\in I_+\cup I_-} \frac{\p_\nu U(|y-\xi_j'|)}{U(|y-\xi_j'|)} \right.\nonumber \\
& &\phantom{aaa} \left. + i \left[\sum_{j\in I_+} (\p_\nu \theta_j+\p_\nu \tilde{\varphi}^*) -\sum_{j\in I_-} (\p_\nu \theta_j+\p_\nu \tilde{\varphi}_j^*) \right] \right\} \nonumber
\end{eqnarray}
By using the boundary conditions \eqref{varphiBC} we find that $\p_\nu \theta_j+\p_\nu \tilde{\varphi}_j^*=0$ on $\p \O_\e$. Hence $\p_\nu V_0(y)=V_0(y) \sum_{j\in I_+ \cup I_-} \frac{\p_\nu U(|y-\xi_j'|)}{U(|y-\xi_j'|)}$. But since $U$ is real-valued it comes that
\[F= V_0 \wedge \p_\nu V_0 =0 \ \text{ on } \p \O_\e. \]

From the estimates for $U$ \eqref{estimeessurlesderivees} we find directly that $\|G\|_\infty= O(\e^2)$ on the boundary. Besides we have that
\[ \log |V_0| =\sum_{j=1}^k \log U( |y-\xi'_j|). \]
Hence  
\begin{equation}\nonumber
\nabla \left( \log |V_0| \right) =\sum_{j=1}^k \frac{\nabla U( |y-\xi'_j|)}{U(|y-\xi'_j|)}
\end{equation}
and for $k,l=1,2$ we have
\begin{equation}\nonumber
\p_{kl}^2 \left( \log |V_0| \right)=\sum_{j=1}^k\frac{ \p_{kl}^2 U(|y-\xi'_j|)}{U(|y-\xi'_j|}-\frac{\p_k U(|y-\xi'_j|) \p_l U (|y-\xi'_j|)}{U^2(|y-\xi'_j|}.
\end{equation}
Now we use the asymptotic behaviour of $U$ and its derivative given by \eqref{estimeessurlesderivees} and \eqref{estimeesurladeriveeseconde} to obtain that, since the vortices $\xi'_j$ are at a distance greater than $\frac{2\d}{\e}$ from the boundary we have
\begin{equation}
\| \nabla G \|_{L^\infty} =O(\e^3) \ \ \text{ and } \ \ \| D^2G\|_{L^\infty}=O(\e^4).
\end{equation}


\end{proof}

\section{Formulation of the problem}

We will look for a solution of the problem \eqref{equation}--\eqref{BC} in the form of a small perturbation of $V_0$. There are several ways to write such a perturbation and we will follow the approach of \cite{delPinoKowalczykMusso2006}. That means that we will use an ansatz which is additive near the vortices (i.e.\ of the form $V_0+\psi$ near the vortices) and which is multiplicative away from the vortices (of the form $V_0e^{i\psi}$).

Let $\tilde{\eta}:\R \rightarrow \R$ be a smooth cut-off function such that $\tilde{\eta}(s)=1$ for $s\leq 1$ and $\tilde{\eta}(s)=0$ for $s\geq 2$. We define
\[ \eta(y)= \sum_{j\in I_+ \cup I_-} \tilde{\eta}(|y-\xi_j'|). \]
We shall look for a solution of \eqref{equationnewform} of the form
\begin{equation}\label{ansatz}
v(y)=\eta(V_0+iV_0\psi)+(1-\eta)V_0 e^{i\psi},
\end{equation}
where $\psi$ is small, however, possibly unbounded near the vortices. We write $\psi=\psi_1+i\psi_2$ with $\psi_1$ and $\psi_2$ real-valued. This very same ansatz was used in similar contexts: in order to construct traveling wave for the Schr\"odinger map equation \cite{LinWei2010} and vortex rings for the Gross-Pitaevskii equation \cite{LinWeiYang2013}, \cite{Weiyang2012}. We set
\begin{equation}\label{defphi}
\Phi=iV_0\psi,
\end{equation}
and we require $\Phi$ to be bounded (and smooth) near the vortices. We observe that
\begin{equation}\nonumber
v=V_0+iV_0\psi+(1-\eta)V_0[e^{i\psi}-1-i\psi],
\end{equation}
We let
\begin{equation}\label{gamma1}
\gamma_1(y)=(1-\eta)V_0[e^{i\psi}-1-i\psi]
\end{equation}
function supported in the set $\{ y \in \O_\e; |y-\xi_j'|>1 \ \text{ for all } j \}$. 
We first derive the boundary conditions that $\psi$ must satisfy to be a solution of the two last 
equations of \eqref{equationnewform}. We remark that, since the vortices $\xi_j$ are at a distance 
greater than $2$ from the boundary (they are at a distance $\frac{2\d}{\e}$ from the boundary $\p \O_
\e$), we have $v=V_0e^{i\psi}$ near the boundary. That is $v=V_0e^{i\psi_1}e^{-\psi_2}$. We thus have 
that $|v|=1$ on $\p \O_\e$ if and only if
\begin{equation}\label{psi2BC}
\psi_2=\log |V_0| \text{  on } \p \O_\e .
\end{equation}
Furthermore we have
\begin{eqnarray}
\p_\nu v&=& \p_\nu V_0e^{i\psi}+iV_0\p_\nu \psi e^{i\psi} \nonumber \\
&=& (\p_\nu V_0+iV_0\p_\nu \psi_1-V_0\p_\nu \psi_2)e^{i\psi}. \nonumber
\end{eqnarray}
Thus, by using that $V_0 \wedge \p_\nu V_0=0$, we find that $v\wedge \p_\nu v=|V_0|^2\p_\nu \psi_1$ on $\p \O_\e$ and $v\wedge \p_\nu v=0$ if and only if
\begin{equation}\label{psi1BC}
\p_\nu \psi_1=0 \text{  on } \p \O_\e.
\end{equation}
We end up with a Dirichlet boundary condition for $\psi_2$ and with a homogeneous Neumann boundary condition for $\psi_1$: 

Now we find the equations for $\psi$ in $\O$. We write these equations in a different way if we are far away from the vortices or near the vortices. 

\textit{In the region $\{ y\in \O_\e; |y-\xi_j'|>2 \ \text{ for all } 1\leq j\leq k\}$}:

By using that $\psi=\psi_1+i\psi_2$ with $\psi_1, \psi_2$ real-valued and that  $v(y)=V_0(y)e^{i\psi}$ in that region we see that 
\begin{equation}
\Delta v +(1-|v|^2) v =  \left [ iV_0\mathcal{L}^\e(\psi) -E +V_0(\nabla \psi)^2-V_0|V_0|^2(1-e^{-2\psi_2}-2\psi_2) \right]e^{i\psi}
\end{equation}
with
\begin{equation}\label{operatorLe}
\L^\e(\psi) =\Delta \psi+2\frac{\nabla V_0}{V_0}\cdot \nabla \psi -2i|V_0|^2\psi_2+\eta \frac{E}{V_0}\psi
\end{equation}
and $E$ given by \eqref{error1}. Note that $\eta=0$ in that region but we write the operator $\L^\e$ like that to be consistent with the sequel. The function $v$ is solution of the GL equations in that region if and only if 
\begin{eqnarray}\label{pblineariseloindesvortex}
\mathcal{L}^\e(\psi)=R-i(\nabla \psi)^2+i|V_0|^2(1-e^{-2\psi_2}-2\psi_2), 
\end{eqnarray}
with
\begin{equation}
R=iV_0^{-1}E.
\end{equation}
Let us describe in a more accurate form the equation above. Let us fix an index $1\leq j \leq k$ and let us define $\alpha_j$ by the relation
\begin{equation}\label{defalpha}
V_0(y)=w(y-\xi_j')\alpha_j(y),
\end{equation}
where by $w$ we mean $w_+$ or $w_-$ depending whether $j\in I_+$ or $j\in I_-$, in other words
\begin{equation}\label{defalpha2}
\alpha_j(y)=e^{i\varphi_N(\e y)}\prod_{l\neq j} w(y-\xi_l').
\end{equation}
For $|y-\xi_j'|<\frac{\delta}{\e}$, since $\a_j$ does not vanish in that region, there are two real functions $A_j$ and $B_j$ so that
\begin{equation}\label{defalpha3}
\alpha_j(y)=e^{iA_j(y)+B_j(y)}.
\end{equation}
These functions are defined by
\begin{eqnarray}
A_j(y)&=&\varphi_N(\e y)+\sum_{l\neq j} \theta_l(y-\xi_l') \nonumber \\
B_j(y)&=& \sum_{l\neq j} \log U(|y-\xi_l'|). \nonumber
\end{eqnarray}
Furthermore, a direct computation shows that, in this region, one has
\begin{equation}\label{estimateonA}
\nabla A_j(y)=O(\e), \ \ \ \ \ \Delta A_j(y)=0
\end{equation}
and
\begin{equation}\label{estimateonB}
\nabla B_j(y)=O(\e^3), \ \ \ \ \ \Delta B_j(y)=O(\e^4).
\end{equation}
Observe that the estimates \eqref{estimateonA}, \eqref{estimateonB} hold true in any region of points at a distance greater than $\frac{\delta}{\e}$ from any $\xi_l'$, with $l\neq j$. \\ Then equation \eqref{pblineariseloindesvortex} in the region $\{ y\in \O_\e; |y-\xi_j'|>2 \ \text{ for all } 1\leq j\leq k\}$ becomes
\begin{eqnarray}\label{equationonpsi1}
\Delta \psi_1&+&2\left( \nabla B_j+ \frac{U'(|y-\xi_j'|)}{U(|y-\xi_j'|)}\frac{y-\xi_j'}{|y-\xi_j'|}\right) \cdot \nabla \psi_1 \nonumber \\
 & & \phantom{aaaaaaa}-2\left(\nabla A_j+\nabla \theta_j(y)\right)\cdot \nabla \psi_2+2\nabla \psi_1\cdot \nabla \psi_2-R_1=0, 
\end{eqnarray}
and
\begin{eqnarray}\label{equationonpsi2}
\Delta \psi_2-2|V_0|^2\psi_2 &+ &2\left( \nabla B_j+\frac{U'(|y-\xi_j'|)}{U(|y-\xi_j'|)}\frac{y-\xi_j'}{|y-\xi_j'|}\right) \cdot \nabla \psi_2 + 2\left( \nabla A_j+\nabla \theta_j(y) \right)\cdot \nabla \psi_1 \nonumber \\
& &\phantom{aa }+|V_0|^2(e^{-2\psi_2}-1-2\psi_2)+|\nabla \psi_1|^2-|\nabla \psi_2|^2-R_2=0.
\end{eqnarray}

\textit{In the region $ \O \setminus \{ y\in \O_\e; |y-\xi_j'|>2 \ \text{ for all } 1\leq j\leq k\}$:}
We have that $v$ is a solution of \eqref{equationnewform} if and only if
\begin{equation}\label{pblinearise}
\L^\e(\psi)= R+M(\psi) \ \text{ in } \O_\e,
\end{equation}
with $\L^\e$ and $R$ as before and $M(\psi)$ is the nonlinear operator defined by
\begin{eqnarray}\label{nonlinearoperatorN}
M(\psi)&=&iV_0^{-1}[ \Delta \gamma_1 +(1-|V_0|^2)\gamma_1-2\text{Re}(\overline{V_0}iV_0\psi)(iV_0\psi+\gamma_1) \nonumber \\
 & & -(2\text{Re}(\overline{V_0}\gamma_1)+|iV_0\psi+\gamma_1|^2)(V_0+iV_0\psi+\gamma_1)]+(\eta-1)\frac{E}{V_0}\psi,
\end{eqnarray}
where $\gamma_1$ is defined by \eqref{gamma1}. Besides in that region we have \[\Delta v +(1-|v|^2v)= iV_0 \left(\L^\e(\psi) -R -M(\psi) \right).\] 

Now we shall rewrite the first equation of problem \eqref{equationnewform}  in terms of the function $\phi$ defined in \eqref{defphi} by $ \phi=iV_0 \psi$ in an intermediate region. Let $1\leq j \leq k$ be fixed, in the region $\{ |y-\xi_j'| <\frac{\d}{\e} \}$ we introduce the translated variable $z:=y-\xi_j'$. We define the function $\phi_j(z)$ through the relation
\begin{equation}\label{defphi_j}
\phi_j(z)=iw(z)\psi(y), \ \ |z|<\frac{\delta}{\e},
\end{equation}
with $y=\xi_j'+z$ namely
\begin{equation}\nonumber
\phi(y)=\phi_j(z)\alpha_j(z),
\end{equation}
where, with abuse of notation, we write $\alpha_j(z)$ to mean the function $\alpha_j(y)$ defined in \eqref{defalpha}. Hence in the translated variable, the ansatz \eqref{ansatz} becomes in that region:
\begin{equation}\label{ansatznearvortices}
v(y)=\alpha_j(z)\left(w(z)+\phi_j(z)+(1-\tilde{\eta}(z))w(z)\left[ e^{\frac{\phi_j(z)}{w(z)}}-1-\frac{\phi_j(z)}{w(z)}\right] \right).
\end{equation}
Let us call $\gamma_j=(1-\tilde{\eta})w\left[e^{\frac{\phi_j}{w}}-1-\frac{\phi_j}{w}\right]$. The support of this function is contained in the set $\{|z|>1\}=\{|y-\xi_j'|>1\}$. Let us consider the operator $L_j^\e$ defined in the following way: for $\phi_j, \psi$ linked through formula \eqref{defphi_j} we set
\begin{equation}\label{defL_j}
L_j^\e(\phi_j)(z):=iw(z)\mathcal{L}^\e(\psi)(\xi_j'+z).
\end{equation}
Then another way to say that $v$ solves \eqref{equationnewform} in the region $\{|y-\xi_j'|<\frac{\delta}{\e}\}$ is
\begin{equation}\label{eqL_j0}
L_j^\e(\phi_j)=\tilde{R_j}+\tilde{N_j}(\phi_j),
\end{equation}
where explicitly $L_j^\e$ becomes (we use $\psi=\frac{\phi_j}{iw}$ and $V_0=\alpha_j w$ to see that)
\begin{eqnarray}\label{eqL_j}
L_j^\e(\phi_j)=L^0(\phi_j)+2(1-|\alpha_j|^2)\text{Re}(\overline{w}\phi_j)w+2\frac{\nabla \alpha_j}{\alpha_j}\cdot \nabla \phi_j \nonumber \\
+2i\phi_j \frac{\nabla \alpha_j}{\alpha_j}\cdot \frac{\nabla w}{w}+\tilde{\eta}\frac{E_j}{V_0^j}\phi_j,
\end{eqnarray}
where $L^0$ is the linear operator defined by
\begin{equation}\nonumber
L^0(\phi)=\Delta \phi+(1-|w|^2)\phi-2\text{Re}(\overline{w}\phi)w.
\end{equation}
The term $\tilde{R}_j$ in \eqref{eqL_j0} is
\begin{equation}\label{defRj}
\tilde{R}_j=-\frac{E_j}{\alpha_j},
\end{equation}
 with $E_j$ which is given by
\begin{equation}\label{defE_j}
E_j=\Delta V_0^j+(1-|V_0^j|^2)V_0^j,
\end{equation}
where $V_0^j$ is the function $V_0$ translated to $\xi_j'$, namely $V_0^j(z)=V_0(z+\xi_j')$. Observe that, in terms of $\alpha_j$, $E_j$ takes the expression
\begin{equation}\label{defE_j2}
E_j=2\nabla \alpha_j \cdot \nabla w +w\Delta \alpha_j+(1-|\alpha_j|^2)|w|^2\alpha_j w.
\end{equation}
The nonlinear term $\tilde{N}_j(\phi_j)$ is given by
\begin{eqnarray}\label{defNj}
\tilde{N}_j(\phi_j)=-\left[ \frac{\Delta (\alpha_j \gamma_j)}{\alpha_j}+(1-|V_0^j|^2)\gamma_j-2|\alpha_j|^2\text{Re}(\overline{w}\phi_j)(\phi_j+\gamma_j) \right. \nonumber \\
\left.\phantom{\frac{\Delta (\alpha_j \gamma_j)}{\alpha_j}} -(2|\alpha_j|^2\text{Re}(\overline{w}\gamma_j)+|\alpha_j|^2|\phi_j+\gamma_j|^2)(w+\phi_j+\gamma_j)\right]+(\tilde{\eta}-1)\frac{E_j}{V_0^j}\phi_j 
\end{eqnarray}
 for  $|z|<2$, and 
\begin{eqnarray}
\tilde{N}_j(\phi_j)= iw(z) \left[i|V_0|^2\left(1-e^{-2\IM(\frac{\phi_j}{w})}-2\IM(\frac{\phi_j}{w}) \right)-i(\nabla (\frac{\phi_j}{w}) )^2 \right] .
\end{eqnarray}
 for  $2<|z|<\frac{\d}{\e}$. Taking into account the explicit form of the function $\alpha_j$ we get
\begin{equation}\label{estimatesonalpha_j}
\nabla \alpha_j(z)=O(\e), \ \ \ \Delta \alpha_j(z)=O(\e^2), \ \ \ |\alpha_j(z)|=1+O(\e^2)
\end{equation}
provided that $|y-\xi_j'|<\frac{\delta}{\e}$. With this in mind, we see that the linear operator $L_j^\e$ is a small perturbation of $L^0$.

To sum up we are led to the following problem in $\psi$:
\begin{equation}\label{fullproblem}
\left\{
\begin{array}{rcll}
\L^\e(\psi)&=& R+N(\psi) & \text{ in } \O_\e, \\
\p_\nu \psi_1&=&0 &\text{  on } \p \O_\e, \\
\psi_2&=&\ln |V_0| & \text{  on } \p \O_\e .
\end{array}
\right. 
\end{equation}
with $N$ defined by 
\begin{equation}
N(\psi):= \begin{cases}
-i(\nabla \psi)^2+i|V_0|^2(1-e^{-2\psi_2}-2\psi_2) \ \text{ if } y \in \{ |y-\xi_j'|>2 \ \text{ for all } 1\leq j\leq k\}, \\
M(\psi) \ \text{ elsewhere }
\end{cases}
\end{equation}
and $M$ defined by \eqref{nonlinearoperatorN}.

We intend to solve the problem \eqref{fullproblem}. For that we would like to invert the operator $\L^\e$ in order to express this problem as a fixed point problem. However we do not expect the operator $\L^\e$ to be invertible (as in \cite{delPinoKowalczykMusso2006}). But working in an appropriated orthogonal to the kernel of $\L^\e$ we can invert that operator. In Sections $4-5$ we solve a projected version of the problem \eqref{fullproblem}. The next step is to adjust the vortices $\xi$ in order to obtain an actual solution of the problem \eqref{equationnewform} (variational reduction), this is done exactly as in \cite{delPinoKowalczykMusso2006}. The theorems are then a consequence of the analysis of Section $8$ of \cite{delPinoKowalczykMusso2006} to which we refer. 

\section{Projected linear theory for $\mathcal{L}^\e$}

Let us consider a small, fixed number $\delta>0$, and points $\xi \in \O_\delta^k$, the set defined in \eqref{defOmegadelta}. We also call $\xi_j'=\frac{\xi_j}{\e}$. For $c_0\in \R$, we consider the following linear problem:

\begin{equation}\label{linearequation1}
\mathcal{L}^\e(\psi)=h+c_0\e^2\chi_{\O_\e \setminus \cup_{j=1}^k B(\xi_j',\delta / \e)} \ \ \text{ in } \O_\e,
\end{equation}
\begin{equation}\label{linearBC1}
\p_\nu \psi_1=0 \ \ \ \ \text{ and } \psi_2=g \ \ \text{ on } \p \O_\e,
\end{equation}
\begin{equation}\label{orthorelation}
\int_{\O_\e \setminus \cup_{j=1}^k B(\xi_j',\delta / \e)} \psi_1 =0, \ \ \text{Re} \int_{|z|<1} \overline{\phi}_jw_{x_l}=0, \ \text{for all } j,l.
\end{equation}

The operator $\mathcal{L}^\e$ is given by \eqref{operatorLe}, $\psi=\psi_1+i\psi_2$ with $\psi_1, \psi_2$ real-valued and $\phi_j$ is the function defined from $\psi$ by the relation \eqref{defphi_j}: $\phi_j=iw(z)\psi(z)$.  For a set $A$, we denote by $\chi_A$ its characteristic function defined as
\begin{equation}\nonumber
\chi_A(y)=1 \ \text{ if } y\in A, \ \ \ \chi_A(y)=0, \ \text{ otherwise }.
\end{equation}

 Note that the constant $c_0$ along with the condition $\int_{\O_\e \setminus \cup_{j=1}^k B(\xi_j',\delta / \e)} \psi_1 =0$ are introduced in order to have existence and uniqueness for this problem. Indeed $\psi_1$ satisfies homogeneous Neumann boundary condition and this type of conditions appears in problems with Neumann boundary conditions. We will establish a priori estimates for this problem. To this end we shall conveniently introduce adapted norms.  Let us fix numbers $0< \sigma,\gamma<1$ and $p>2$, let us denote $z=y-\xi_j'$, and $r_j=|y-\xi_j'|=|z|$. We define
\begin{eqnarray}\label{norme*}
\|\psi\|_*= \sum_{j=1}^k \| \phi_j\|_{W^{2,p}(|z|<3)} +\sum_{j=1}^k \left[ \| \psi_1\|_{L^\infty(r_j>2)}+\|r_j\nabla \psi_1\|_{L^\infty(r_j>2)}\right] \nonumber \\
+ \sum_{j=1}^k \left[ \| r_j^{1+\sigma}\psi_2 \|_{L^\infty(r_j>2)}+\|r_j^{1+\sigma} \nabla \psi_2\|_{L^\infty(r_j>2)} \right],
\end{eqnarray}

\begin{equation}\label{norme**}
\|h\|_{**}=\sum_{j=1}^k \|\tilde{h}_j\|_{L^p(|z|<3)}+\sum_{j=1}^k\left[ \|r_j^{2+\sigma}h_1\|_{L^\infty(r_j>2)}+\|r_j^{1+\sigma}h_2\|_{L^\infty(r_j>2)} \right].
\end{equation}
Here we have denoted $\tilde{h}_j(z)=iw(z)h(z+\xi_j')$. Besides, we define
\begin{eqnarray}\label{norme***}
\|g\|_{***}=\e^{-1-\sigma}\|g\|_{L^\infty(\p \O_\e)}+\e^{-2-\sigma}\|\nabla g\|_{L^\infty(\p \O_\e)}+\e^{-2-\sigma -\gamma} [\nabla g]_{\gamma,\p \O_\e}
\end{eqnarray}
where $[\nabla g]_{\gamma,\p \O_\e}=\sup _{ x\neq y, x,y \in \p \O_\e} \frac{|\nabla_{\p \O_\e} g(y)-\nabla_{\p \O_\e}g(x)|}{|y-x|^\gamma}$.
We want to prove the following result.

\begin{lemma}\label{lemma4.1}
There exists a constant $C>0$, dependent on $\delta$ and $\O$, such that for all $\e$ sufficiently small,  all points $\xi \in \O_\delta^k$, any constant $c_0$ in $\mathbb{R}$ and any solution of problem \eqref{linearequation1}--\eqref{orthorelation} we have
\begin{equation}\label{aprioriestimate4.7}
\|\psi\|_*\leq C\left[ |\log \e|\|h\|_{**}+\|g\|_{***} \right].
\end{equation}
\end{lemma}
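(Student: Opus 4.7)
I would prove Lemma \ref{lemma4.1} by contradiction combined with a two-scale blow-up analysis in the spirit of Section 4 of \cite{delPinoKowalczykMusso2006}, with modifications that reflect the hybrid boundary conditions here ($\p_\nu \psi_1=0$ Neumann, $\psi_2=g$ Dirichlet). Suppose the estimate fails. Then there exist $\e_n \to 0$, points $\xi^n \in \O_\delta^k$, constants $c_{0,n} \in \R$, and right-hand sides $(h_n,g_n)$ with $|\log \e_n|\,\|h_n\|_{**}+\|g_n\|_{***} \to 0$, together with solutions $\psi_n$ of \eqref{linearequation1}--\eqref{orthorelation} normalized so that $\|\psi_n\|_* = 1$. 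The goal is to show that every piece of $\|\psi_n\|_*$ tends to zero, contradicting the normalization.

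\textbf{Inner limit at each vortex.} Fix $j$, set $z=y-(\xi_j^n)'$, and $\phi_{j,n}(z)=iw(z)\psi_n(z+(\xi_j^n)')$. By \eqref{eqL_j} and the perturbation estimates \eqref{estimatesonalpha_j}, the operator $L_j^{\e_n}$ is a small $\e_n$-perturbation of $L^0$, so $L^0(\phi_{j,n}) = \tilde h_{j,n}+o(1)$ in $L^p(|z|<3)$. Since $\|\phi_{j,n}\|_{W^{2,p}(|z|<3)}\le 1$, a subsequence converges in $C^1_{\mathrm{loc}}(\R^2)$ (after combining with the outer bounds below to control growth at infinity) to a bounded limit $\phi_j^\infty$ satisfying $L^0\phi_j^\infty = 0$ on $\R^2$ together with the orthogonality $\mathrm{Re}\int_{|z|<1}\overline{\phi_j^\infty}w_{x_l}=0$ for $l=1,2$. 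The non-degeneracy of the standard vortex, whose bounded kernel of $L^0$ is spanned exactly by $w_{x_1}, w_{x_2}$, combined with the orthogonality, forces $\phi_j^\infty \equiv 0$.

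\textbf{Outer limit.} Put $\hat\psi_n(x):=\psi_n(x/\e_n)$ on compacta of $\O \setminus \{\xi_1^\infty,\ldots,\xi_k^\infty\}$ (with $\xi^\infty = \lim \xi^n$ along a subsequence). The weight $r_j^{1+\sigma}$ in $\|\cdot\|_*$ yields $|\hat\psi_{2,n}(x)| \lesssim \e_n^{1+\sigma} |x-\xi_j^n|^{-1-\sigma} \to 0$ locally uniformly, while $\hat\psi_{1,n}$ stays uniformly bounded together with a gradient bound $|\nabla \hat\psi_{1,n}(x)|\lesssim |x-\xi_j^n|^{-1}$. Passing to the limit in the rescaled versions of \eqref{equationonpsi1}--\eqref{equationonpsi2}, the coupling $(\nabla A_j+\nabla\theta_j)\cdot\nabla\hat\psi_{2,n}$ vanishes in the limit, so $\hat\psi_1^\infty$ is a bounded harmonic function on $\O\setminus\{\xi_1^\infty,\ldots,\xi_k^\infty\}$, extends harmonically across each $\xi_j^\infty$ by removable singularities, satisfies homogeneous Neumann boundary data (because $\|g_n\|_{***}\to 0$ makes $g_n$ and its derivatives negligible after rescaling), and has mean zero on $\O\setminus \cup_j B(\xi_j^\infty,\delta)$ by the third condition in \eqref{orthorelation}. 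Hence $\hat\psi_1^\infty \equiv 0$. Combined with $\phi_j^\infty\equiv 0$ from the inner analysis, elliptic bootstrap in the intermediate annuli $\{2<|z|<\delta/\e\}$ and a covering argument give $\|\psi_n\|_* \to 0$, the desired contradiction.

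\textbf{Main obstacle.} Two technical steps require care. The constant $c_{0,n}$ is fixed by a solvability condition: integrating the real part of \eqref{linearequation1} against $1$ over $\O_{\e}\setminus\cup_j B(\xi_j',\delta/\e)$, using $\p_\nu \psi_1=0$ on $\p \O_\e$ and the mean-zero condition, the gradient and lower-order terms are controlled through the weighted norm, leaving $|c_{0,n}|\cdot|\O\setminus\cup_j B(\xi_j,\delta)| \le C(\|h_n\|_{**}+\|g_n\|_{***})$, hence $c_{0,n}\to 0$. The appearance of $|\log \e|$ in \eqref{aprioriestimate4.7} is intrinsic to the two-dimensional Neumann problem on $\O_\e$ of diameter $\sim \e^{-1}$: recovering the $L^\infty$-bound of the mean-zero $\psi_{1,n}$ from $\Delta\psi_{1,n}=h_{1,n}+c_{0,n}\e^2\chi +(\text{lower-order})$ by inversion against a logarithmic Green kernel produces a constant that grows like the log of the domain diameter. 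The hardest part of the proof is carrying out this recovery uniformly in $\xi^n\in\O_\delta^k$ while matching, through the annular regions $\{1<|z|<\delta/\e\}$, the inner $W^{2,p}$-control of $\phi_{j,n}$ with the outer weighted $L^\infty$-control of $\hat\psi_n$, using the explicit expansions \eqref{estimateonA}--\eqref{estimateonB} of $A_j$ and $B_j$ to handle the $\psi_1$--$\psi_2$ coupling at leading order.
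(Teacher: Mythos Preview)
Your contradiction/blow-up strategy follows the paper's in outline, but there is a genuine gap in the inner step. You assert that the bounded kernel of $L^0$ on $\R^2$ is spanned by $w_{x_1},w_{x_2}$. This is false: the phase-rotation mode $iw$ is also bounded and lies in $\ker L^0$ (since $\RE(\bar w\cdot iw)=0$ and $\Delta w+(1-|w|^2)w=0$), and it is \emph{not} killed by the orthogonality conditions $\RE\int_{|z|<1}\bar\phi_j w_{x_l}=0$. In your $\psi$-variables, $\phi_j^\infty=c\,iw$ corresponds exactly to $\psi_1^\infty\equiv c$, $\psi_2^\infty\equiv 0$, which is perfectly compatible with the outer bounds in $\|\cdot\|_*$ (they only say $\psi_1$ is bounded and $|\nabla\psi_1|\lesssim r^{-1}$). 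Your outer blow-up gives $\hat\psi_1^\infty\equiv 0$ at scale $r\sim 1/\e$, but the inner limit lives at scale $r=O(1)$; nothing in the $*$-norm prevents $\psi_{1,n}$ from sitting near $c$ on $\{r_j\le R\}$ and drifting to $0$ across the huge annulus $\{R<r_j<\delta/\e\}$. The ``matching/covering'' you invoke is precisely the hard part and is not supplied.

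This missing mode is exactly what the paper's proof is organized around, and it is the true source of the $|\log\e|$ factor (not, as you suggest, the Neumann inversion on the large domain). After establishing the outer decay, the paper multiplies by a cutoff so that $\hat\psi^n=0$ on $\partial B(\xi_j',\delta/\e)$, then splits $\hat\phi_j=\hat\phi^0+\hat\phi^\perp$ into radial average and remainder. The remainder $\hat\phi^\perp$ is controlled by a \emph{quantitative} coercivity inequality for the quadratic form $B(\phi,\phi)=-\RE\int L^0(\phi)\bar\phi$ under the $w_{x_l}$-orthogonality (Lemma~A.1 of \cite{delPinoKowalczykMusso2006}), not by a compactness limit. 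The radial part, which carries the dangerous $iw$-direction, is handled by the explicit ODE solution
\[
\hat\psi_1^0(r)=-\int_r^{\delta/\e}\frac{ds}{sU^2(s)}\int_0^s H_1^0(t)U^2(t)\,t\,dt,
\]
and it is the outer integral $\int^{\delta/\e}ds/s\sim|\log\e|$ that forces the hypothesis $|\log\e|\,\|h\|_{**}\to 0$, i.e.\ $H_1^0=o(1)/(|\log\e|\,r^{2+\sigma})$, so that $\hat\psi_1^0=o(1)$. A smaller point: your integration argument for $c_{0,n}$ only yields boundedness (the boundary contribution on $\partial B(\xi_j',\delta/\e)$ is $O(1)$, not $o(1)$); the conclusion $c_{0,n}\to 0$ comes afterwards from the outer limit, where $\Delta\tilde\psi_1^*=c_*\chi$ with $|\tilde\psi_1^*|\le 1$ forces $c_*=0$.
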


\begin{proof}
We argue by contradiction. We assume that there exist sequences $\e_n\rightarrow 0$, $c_n$, points $\xi_{nj}\rightarrow \xi_j^* \in \O$ with $\xi_j^*\neq \xi_i^*$ for all $i\neq j$, and functions $\psi^n, h_n, g_n$ which satisfy
\begin{equation}\nonumber
\mathcal{L}^{\e_n}(\psi^n)=h_n+c_n\e_n^2\chi_{\O_{\e_n}\setminus \cup_{j=1}^k B(\xi_{nj}',\delta/\e_n)} \ \text{ in } \O_{\e_n},
\end{equation}
\begin{equation}\nonumber
\p_\nu \psi_1^n=0 \ \ \ \ \text{ and } \psi_2^n=g_n \text{ on } \p \O_{\e_n},
\end{equation}
\begin{equation}\nonumber
\int_{\O_{\e_n}\setminus \cup_{j=1}^k B(\xi_{nj}',\delta/\e_n)} \psi_1^n=0, \ \ \ \text{Re} \int_{|z|<1} \overline{\phi}_j^n w_{x_l}=0, \ \forall j=1,...k, \ l=1,2,
\end{equation}
with
\begin{equation}\nonumber
\|\psi^n\|_*=1 \ \ \text{ and } |\log \e_n|\|h_n\|_{**}+\|g_n\|_{***} \rightarrow 0.
\end{equation}
We will show that $\|\psi^n\|_* \rightarrow 0$ and that will be a contradiction. \\

\textit{Estimates in the region $\{ |y-\xi_{nj}'|>\d/ \e_n, \text{ for all } 1\leq j \leq n \}$: }

As a first step we shall show that the sequence of numbers $c_n$ is bounded. We observe from \eqref{operatorLe} (see also \eqref{equationonpsi1}) and estimates \eqref{estimateonA}, \eqref{estimateonB}, that on the region $\{ |y-\xi_{nj}'|>\frac{\delta}{\e_n} \text{ for all } 1\leq j \leq k\}$ we have
\begin{equation}\nonumber
\RE(\mathcal{L}^{\e_n}(\psi^n))=\Delta \psi_1^n+O(\e_n^3)\nabla \psi_1^n+O(\e_n)\nabla \psi_2^n,
\end{equation}
and hence, integrating on $\O_{\e_n} \setminus \cup_{j=1}^kB(\xi_{nj}',\delta/\e_n)$ we obtain that
\begin{eqnarray}
|c_n|\leq C\int_{\O_{\e_n} \setminus \cup_{j=1}^kB(\xi_{nj}',\delta/\e_n)} |h_n^1|+C\left|\int_{\cup_{j=1}^k \p B(\xi_{nj}',\delta/\e_n)} \p_\nu \psi_1^n -\int_{\p \O_\e} \p_\nu \psi_1^n \right| \nonumber \\
+\int_{\O_{\e_n} \setminus \cup_{j=1}^kB(\xi_{nj}',\delta/\e_n)}\left(O(\e_n^3)|\nabla \psi_1^n|+ O(\e_n)|\nabla \psi_2^n|\right). \nonumber
\end{eqnarray}
But for all $1\leq j\leq k$ we can use that, since we work in the region $\{r_j>\frac{\delta}{\e_n}\}$, for $n$ large enough we have
\[|\nabla \psi_1^n|\leq \| \nabla \psi_1^n\|_{L^\infty(r_j>2)}\leq \frac{1}{r_j}\|\psi^n\|_* \leq C\e_n\|\psi_n\|_*\]
and other similar estimate hold  for $\nabla \psi_2^n$ and $h_n$ so that we get (by using that $\p_\nu \psi_1^n=$ on $\p \O_\e$)
\begin{equation}\nonumber
|c_n|\leq C  + C \e_n^\sigma(\|h_n\|_{**}+\|\psi^n\|_{*}).
\end{equation}
It follows that $c_n$ is bounded. We assume then that $c_n\rightarrow c_*$ for some $c_*$ in  $\R$. We will show that $c_*=0$ and that $\psi^n$ converges to $0$. We set $\tilde{\psi}^n(x)=\psi^n(\frac{x}{\e_n})$. From the bounds assumed we have that for any number $\delta'>0$
\begin{equation}\nonumber
\Delta \tilde{\psi}_1^n=O(\e_n^\sigma)+c_n \chi_{\O \setminus \cup_{j=1}^k B(\xi_{nj},\d)} \ \text{ in } \O \setminus \bigcup_{j=1}^k B(\xi_{nj},\delta'),
\end{equation}
\begin{equation}\nonumber
\p_\nu \tilde{\psi}_1^n=0 \ \text{ on } \p \O,
\end{equation}
and, moreover,
\begin{equation}\nonumber
\|\tilde{\psi}_1^n\|_\infty \leq 1, \ \ \ \| \nabla \tilde{\psi}_1^n\|_\infty \leq C_{\delta'}.
\end{equation}
Passing to a subsequence, we then get that $\tilde{\psi}_1^n$ converges uniformly over compact subsets of $\O \setminus \{ \xi_1^*,...,\xi_k^*\}$ to a function $\tilde{\psi}_1^*$ with $|\tilde{\psi}_1^*|\leq 1$ which solves
\begin{equation}\nonumber
\Delta \tilde{\psi}_1^*=c_*\chi_{\O \setminus \cup_{j=1}^k B(\xi_j^*,\delta)} \ \text{ in } \O \setminus \{ \xi_1^*,..., \xi_k^*\},
\end{equation}
By integrating the previous relations we find that $c_*=0$ and hence that $\tilde{\psi}_1^*$ is a constant ($\tilde{\psi}_1^*$ can not contain a logarithmic part since $\|\tilde{\psi}_1^*\|_\infty \leq 1$). But passing to the limit in the orthogonality condition for $\tilde{\psi}_1^n$ provides $\int_{\O\setminus \cup_{j=1}^k B(\xi_j^*,\delta)} \tilde{\psi}_1^*=0$ and thus the constant must be zero. We conclude that $\tilde{\psi_1}^n$ goes to zero uniformly and in $C^1$-sense away from the points $\xi_1^*,..., \xi_k^*$. This implies in particular that
\begin{equation}\label{estimatepsi1loindesvortex}
|\psi_1^n|+\frac{1}{\e_n}|\nabla \psi_1^n|\rightarrow 0 \text{ on } \{ |z-\xi_{nj}'|\geq \frac{\delta}{2\e_n}, \ \forall \ 1\leq j \leq k\},
\end{equation}
uniformly. Note that we obtained an estimate for $\psi_1$ in a region a little bit larger than  $\{ |z-\xi_{nj}'|> \frac{\delta}{\e_n}, \ \forall \ 1\leq j \leq k\}$ because we will need it in the sequel. \\
Let us now consider the imaginary part of the equation. Using the definition of the operator $\mathcal{L}^\e$, cf.\ \eqref{operatorLe}, along with the estimates on $V_0$, $\nabla V_0$ and the bounds on $\| \psi^n\|_*$, $\|h_n\|_{**}$ and $\|g\|_{***}$ we find that
\begin{equation}\nonumber
-\Delta \psi_2^n+2\psi_2^n=o(\e_n^{1+\sigma}) \text{ in }  \O_{\e_n} \setminus \bigcup_{j=1}^k B(\xi_{nj}',\frac{\delta}{2\e_n}),
\end{equation}
\begin{equation}\nonumber
\psi_2^n=o(\e_n^{1+\sigma}) \ \text{ on } \p \O_{\e_n}.
\end{equation}
To be more precise we have \[\| -\Delta \psi_2^n+2\psi_2^n \|_{L^\infty}=o(\e_n^{1+\sigma}) \text{ in } \O_{\e_n} \setminus \bigcup_{j=1}^k B(\xi_j',\frac{\d}{2\e_n}) \] and 
\[ \| \psi_2^n \|_{C^{1,\gamma}}=o(\e_n^{1+\sigma}) \text{ on } \p \O_{\e_n}.\]   
Note also that we have 
\[ \psi_2^n =O(\e_n) \ \text{ on } \bigcup_{j=1}^k \p B(\xi_j',\frac{\d}{2\e_n}). \]
We first construct a barrier which proves that 
\begin{equation}
\| \psi_2 \|_\infty =o(\e_n^{1+\s}) \ \ \text{ in } \{ |z-\xi_{nj}'|\geq \frac{3\delta}{4\e_n}, \ \forall \ 1\leq j \leq k\}.
\end{equation}We let $\eta_1$ be defined by 
\[ \eta_1(y)= \sum_{j=1}^k \tilde{\eta}\left( \frac{\e_n |y-\xi_j'|}{\d}\right), \]
where $\tilde{\eta}$ is a smooth cut-off function such that $\tilde{\eta}(s)=1$ for $s\leq 1$ and $\tilde{\eta}(s)=0$ for $s\geq 2$. We take
\begin{equation}
\mathcal{B}_1=o(\e_n^{1+\s})[1-\eta_1(y)]+ \left[O(\e_n^{1+\s})e^{-r_j}+o(\e_n^{1+\s}) \right]\eta_1(y).
\end{equation}
We can check that $-\Delta \B_1 +2\B_1 \geq o(\e_n^{1+\s})$ in $\{ |z-\xi_{nj}'|\geq \frac{\delta}{2\e_n}, \ \forall \ 1\leq j \leq k\}$ and $ \B_1 =O(\e_n^{1+\s})$ on $\p \left (\O_{\e_n} \setminus \bigcup_{j=1}^k B(\xi_j',\frac{\d}{2\e_n}) \right)$. Thus 
\[ |\psi_2| \leq \B_1 =o(\e_n^{1+\s}) \text{ in } \{ |z-\xi_{nj}'|\geq \frac{3\delta}{4\e_n}, \ \forall \ 1\leq j \leq k\}.\] Now to estimate the $L^\infty$ norm of the gradient we use a combination of elliptic estimates (cf.\ Propositions \ref{EE1} and \ref{EEE2}) and this yields
\begin{equation}\label{estimateloindesvortex2}
\frac{1}{\e_n^{1+\sigma}}\left( |\psi_2^n|+|\nabla \psi_2^n|\right) \rightarrow 0 \text{ on } \{ |z-\xi_{nj}'|\geq \frac{\delta}{\e_n}, \ \forall \ 1\leq j \leq k\}.
\end{equation}

\textit{ Estimates in the region $\O_{\e_n} \setminus \{ |y-\xi_{nj}'|>\d/ \e_n, \text{ for all } 1\leq j \leq n \} $:} \\
Now we want to derive estimates on $\psi^n$ near the vortices. Let us consider a smooth cut-off function $\hat{\eta}$ with
\begin{equation}\nonumber
\hat{\eta}=\begin{cases}
1 \text{ if } s< \frac 12, \\
0 \text{ if } s>1,
\end{cases}
\end{equation}
and define
\begin{equation}\nonumber
\hat{\psi}^n(z)=\hat{\eta}\left( \frac{\e_n |z-\xi_{nj}'|}{\delta} \right)\psi^n(z).
\end{equation}

Let us compute the equation satisfied by $\hat{\psi}^n$. First observe that the derivative of $z \mapsto \hat{\eta}\left( \frac{\e_n |z-\xi_{nj}'|}{\delta} \right)$ are supported in the annulus $ \frac {\delta}{2\e_n}< |z-\xi_{nj}'|<\frac{\delta}{\e_n}$. In that region we have $|\psi_1^n|+\frac{1}{\e_n}|\nabla \psi_1^n|\rightarrow 0$ and $\left( | \psi_2^n| +|\nabla \psi_2^n|\right)=O(\e_n^{1+\s}) $. Thus, for real and imaginary parts we obtain the estimates


\begin{equation}\nonumber
\nabla_z\eta \left( \frac{\e_n |z-\xi_{nj}'|}{\delta} \right) \nabla \psi^n= \begin{pmatrix}
o(\e_n^2) \\
O(\e_n^{2+\sigma})
\end{pmatrix},
\ \ \ \ \psi^n\Delta_z \hat{\eta}\left( \frac{\e_n |z-\xi_{nj}'|}{\delta} \right)= \begin{pmatrix}
o(\e_n^2) \\
O(\e_n^{2+\sigma})
\end{pmatrix}.
\end{equation}
Furthermore, by computing $\frac{\nabla V_0}{V_0}$ (cf.\ formula  \eqref{gradientV0}) and by using appropriates estimates we also find that
\begin{equation}\nonumber
\psi^n\frac{\nabla V_0}{V_0}\nabla_z \hat{\eta}\left( \frac{\e_n |z-\xi_{nj}'|}{\delta} \right)= \begin{pmatrix}
o(\e_n^2) \\
O(\e_n^{3+\sigma})
\end{pmatrix}.
\end{equation}
Thus we get
\begin{equation}\nonumber
\mathcal{L}_n^\e(\hat{\psi}^n)=o(1) \begin{pmatrix}
\frac{1}{(|\log \e_n|)r_j^{2+\sigma}}+\e_n^2 \\
\frac{1}{|\log \e_n| r_j^{1+\sigma}}
\end{pmatrix} \text{ in } 2<| y-\xi_{nj}'|<\frac{\d}{\e_n},
\end{equation}
\begin{equation}\nonumber
\| iw\mathcal{L}_n^\e(\hat{\psi}^n)\|_{L^p(\{ |z|\leq 2\})}=o\left( \frac{1}{|\log \e_n|} \right) 
\end{equation}
and
\begin{equation}\label{4.8}
\hat{\psi}^n=0 \ \text{ on } \p B(\xi_{nj}',\frac{\delta}{\e_n}).
\end{equation}
The following intermediate result provides an outer estimate. In order to simplify the notations we omit the subscript $n$ in the quantities involved after the statement of this lemma:.
\begin{lemma}\label{outerestimates}
There exist positive numbers $R_0$, $C$ such that for all large $n$
\begin{multline}
\| \hat{\psi}^n_1\|_{L^\infty(r_j>R_0)}+\| r_j \nabla \hat{\psi}^n_1\|_{L^\infty(r_j>R_0)}+ \|r_j^{1+\sigma}\hat{\psi}^n_2\|_{L^\infty(r_j>R_0)}+ \|r_j^{1+\sigma} \nabla \hat{\psi}^n_2\|_{L^\infty(r_j>R_0)} \\
\leq C\left( \| \hat{\phi}^n\|_{C^1(r_j<R_0)}+o(1) \right),
\end{multline}
where $\hat{\phi}^n=iV_0\hat{\psi}^n$.
\end{lemma}
\begin{proof}
 Using \eqref{equationonpsi1} and \eqref{equationonpsi2} we obtain the following relations for $2<r_j<\frac{\d}{\e}$:
\begin{equation}\label{4.9}
-\Delta \hat{\psi}_1=O\left(\frac{1}{r_j^3}\right) \nabla \hat{\psi}_1+O\left( \frac{1}{r_j}  \right) \nabla \hat{\psi}_2 +o(1)\left( \frac{1}{r_j^{2+\sigma}}+\e^2 \right),
\end{equation}
\begin{equation}\label{4.10}
-\Delta \hat{\psi}_2+2|\alpha_j|^2|w_j|^2\hat{\psi}_2+O\left(\frac{1}{r_j^3}\right)\nabla \hat{\psi}_2=O \left( \frac{1}{r_j}\right) \nabla \hat{\psi}_1 +o(1)\frac{1}{r_j^{1+\sigma}},
\end{equation}
where $\alpha_j$ is given by \eqref{defalpha} and $w_j(y)=w(y-\xi_j')$. Note that $|\a_j|^2|w_j|^2\geq c>0$ for some $c>0$ in the region $r_j>2$ (cf.\ estimates \eqref{estimatesonalpha_j}). Let us call $p_1,p_2$ the respective right-hand sides of equations \eqref{4.9} and \eqref{4.10}. Then we see that
\begin{equation}\nonumber
|p_2|\leq \frac{B}{r_j^{1+\sigma}}, \ \ B=\| r_j^{\sigma} \nabla \hat{\psi}_1\|_{L^\infty(r_j>2)}+o(1).
\end{equation}
The function 
\begin{equation}
\mathcal{B}_2:= C \frac{B +\|\hat{\psi}_2\|{L^\infty(r_j=2})}{r_j^{1+\s}}, 
\end{equation}
satisfies $-\Delta \B_2 +2c \B_2 \geq \frac{B +\|\hat{\psi}_2\|{L^\infty(r_j=2})}{r_j^{1+\s}}$ in $\{ 2<r_j< \frac{\d}{\e}\}$ and $ \B_2 \leq |\hat{\psi}_2|$ on $\{ r_j=2\} \cup \{r_j=\frac{\d}{\e}\}$, for $C$ large enough, thus we obtain 
\begin{equation}\nonumber
|\hat{\psi}_2| \leq C \frac{B+\| \hat{\psi}_2\|_{L^\infty(r_j=2)}}
{r_j^{1+\sigma}}, \ \ \ \ 2< r_j< \frac{\delta}{\e}.
\end{equation}
To estimate the gradient of $\hat{\psi}_2$ we use a combination of elliptic estimates (cf.\ Proposition \ref{EE1} and \ref{EE3}) and we obtain 
\begin{equation}\nonumber
|\nabla \hat{ \psi}_2|+ |\hat{\psi}_2| \leq C\frac{B+\| \hat{\psi}_2\|_{L^\infty(r_j=2)}}
{r_j^{1+\sigma}}, \ \ \ \ 2< r_j< \frac{\delta}{\e}.
\end{equation}
Let us use these estimates to now estimate $p_1$. We have that
\begin{equation}\nonumber
|p_1|\leq \frac{C}{r_j^{2+\sigma}}\left( \|r_j^\sigma \nabla \hat{\psi}_1\|_{L^\infty(r_j>2)}+\|r_j^{1+\sigma} \nabla \hat{\psi}_2\|_{L^\infty(r_j>2)}+o(1) \right)+o(\e^2),
\end{equation}
and hence
\begin{equation}\nonumber
|p_1|\leq \frac{B'}{r_j^{2+\sigma}}+o(\e^2), \ \ B'=C\left( \|r_j^\sigma \nabla \hat{\psi}_1\|_{L^\infty(r_j>2)}+\| \hat{\psi}_2\|_{L^\infty(r_j=2)}+o(1)\right).
\end{equation}
We can see that the function 
\begin{equation}\nonumber
\B_3=\frac{B'}{\sigma^2}\left( 1-\frac{1}{r_j^\sigma}\right) +o(1)(\delta^2-r_j^2\e^2)+\|\hat{\psi}_1\|_{L^\infty(r_j=1)}
\end{equation}
satisfies $-\Delta \B_3 = \frac{B'}{r_j^{2+\sigma}}+o(\e^2)$ in $\{2<r_j<\frac{\d}{\e}\}$ and $\B_3\geq |\hat{\psi}_1|$ on $\{r_j=2\}\cup\{r_j=\frac{\d}{\e}\}$. Thus $|\hat{\psi}_1| \leq \B_3$  and 
\begin{equation}\nonumber
\| \hat{\psi}_1\|_{L^\infty(r_j>2)}\leq C+\| \hat{\psi}_1\|_{L^\infty(r_j=2)}.
\end{equation}
Now we seek for an estimate for $\nabla \hat{\psi}_1$. Let us define $\tilde{\psi}_1(z)=\hat{\psi}_1(\xi_j'+R(e+z))$ where $|e|=1$ and $R<\frac{\delta}{\e}$. Then for $|z|\leq \frac{1}{2}$ we have
\begin{equation}\nonumber
|\Delta \tilde{\psi}_1(z)| \leq CB'+o(1).
\end{equation}
Since also $|\tilde{\psi}_1|\leq CB'$ in this region, it follows from elliptic estimates that $|\nabla \tilde{\psi}_1(0)|\leq CB'$. Since $R$ and $e$ are arbitrary, what we have established is
\begin{equation}\nonumber
|\hat{\psi}_1|+|r_j\nabla \hat{\psi}_1| \leq C \left[ \|r_j^\sigma \nabla \hat{\psi}_1\|_{L^\infty(r_j>2)}+ \|\hat{\psi}_2\|_{L^\infty(r_j=2)}+o(1) \right]
\end{equation}
Now,
\begin{equation}\nonumber
\|r_j^\sigma \nabla \hat{\psi}_1\|_{L^\infty(r_j>2)} \leq R_0^\sigma\| \nabla \hat{\psi}_1\|_{L^\infty(2<r_j<R_0)}+\frac{1}{R_0^{1-\sigma}}\|r_j\nabla \hat{\psi}_1\|_{L^\infty(r_j>R_0)},
\end{equation}
thus fixing $R_0$ sufficiently large we obtain
\begin{equation}\nonumber
|\hat{\psi}_1|+|r_j\nabla \hat{}\psi_1| \leq C\left[ \|\nabla \hat{\psi}\|_{L^\infty(2<r_j<R_0)}+o(1) \right] \ \text{ for } r_j>R_0,
\end{equation}
and also
\begin{equation}\nonumber
|\hat{\psi}_2|+ |\nabla \hat{\psi}_2| \leq \frac{C}{r_j^{1+\sigma}}\left[ \| \nabla \hat{\psi}\|_{L^\infty(2<r_j<R_0)}+o(1)\right] \ \text{ for } r_j>R_0.
\end{equation}
Now let us define $\hat{\phi}$ through $\hat{\psi}=\frac{\hat{\phi}}{iV_0}$. We have $\| \nabla \hat{\psi} \|_{L^\infty} \leq C \|\hat{\phi}\|_{C^1}$ with $C$ which depends on the $C^1$ norm of $V_0$ (we note that this norm is finite in $2<r_j<R_0$). This concludes the proof of Lemma \ref{outerestimates}.
\end{proof}

\textit{Continuation of the proof of Lemma 4.1.} Let us go back to the contradiction argument. Since $\| \psi\|_*=1$ and since from \eqref{estimatepsi1loindesvortex} and \eqref{estimateloindesvortex2} the corresponding portion of this norm of $\psi$ goes to zero on the region $r_j>\frac{\delta'}{\e}$ for all $j$ for any given $\delta'$ (recall that $r_j\leq \frac{\text{diam}( \O)}{\e}$), we conclude by using the definition of the norm $\| \cdot \|_*$ and the previous lemma that there exists some $1\leq j\leq k$ and $m>0$ such that, for $R_0$ as in the lemma
\begin{equation}\label{4.11}
\| \hat{\phi}_j\|_{W^{2,p}(|z|<R_0)} \geq m,
\end{equation}
where, as in \eqref{defphi_j}
\begin{equation}\nonumber
\hat{\phi}_j(z)=iw(z)\hat{\psi}(\xi_j'+z), 
\end{equation}
and where we also used the Sobolev injections $W^{2,p} \hookrightarrow C^1$.
Let us consider the decomposition
\begin{equation}\nonumber
\hat{\psi}(\xi_j'+z)= \hat{\psi}^0(r)+\hat{\psi}^\perp(z), \ \ \ r=|z|,
\end{equation}
\begin{equation}\nonumber
\hat{\psi}^0(r)=\frac{1}{2\pi r}\int_{|z|=r} \hat{\psi}(\xi_j'+z)d\sigma(z),
\end{equation}
and correspondingly write
\begin{equation}\label{4.12}
\hat{\phi}_j=\hat{\phi}^0+\hat{\phi}^\perp, \ \ \ \hat{\phi}^0=iw\hat{\psi}^0, \ \ \ \hat{\phi}^\perp=iw\hat{\psi}^\perp.
\end{equation}
Using equations \eqref{4.8}, formula \eqref{eqL_j} and the estimates on $\alpha_j$ \eqref{estimatesonalpha_j} along with the fact that $\|\psi\|_*=1$ we can see that
\begin{equation}\nonumber
\left\{
\begin{array}{rcll}
L^0(\hat{\phi}_j)&=&G \ \text{ in } B(0,\frac{\delta}{\e}), \\
\hat{\phi}_j&=&0\ \text{ on } \p B(0,\frac{\delta}{\e}),
\end{array}
\right.
\end{equation}
where 
\begin{equation}\label{2222}
\|G\|_{L^p(\{|z|\leq 2\})}=o\left(\frac{1}{|\log \e|}\right) 
\end{equation}
and
\begin{equation}\label{1111}
H=-iw^{-1}G=o(1) \begin{pmatrix}
\frac{1}{|\log \e| r^{2+\sigma}}+\e^2 \\
\frac{1}{|\log \e| r^{1+\sigma}}
\end{pmatrix}
\text{ for } r>2.
\end{equation}
We also set
\begin{eqnarray}
H(\xi_j'+z)=H^0(r)+H^\perp(z), \ \ r=|z|, \nonumber \\
H^0(r)=\frac{1}{2\pi}\int_{|z|=r} H(\xi_j'+z)d\sigma(z), \nonumber
\end{eqnarray}
and we decompose $G=G^0+G^\perp$ in analogous way to \eqref{4.12}. We can then check that
\begin{equation}\nonumber
\left\{
\begin{array}{rcll}
L^0(\hat{\phi}^\perp)=G^\perp \ \ \text { in } B(0,\frac{\delta}{\e}), \\
\hat{\phi}^\perp =0 \ \ \text{ on } \p B(0,\frac{\delta}{\e}).
\end{array}
\right.
\end{equation}
From estimates \eqref{2222} and \eqref{1111}, by using H\"older's inequality and the fact that $\|\hat{\psi}\|_*$ is uniformly bounded we find that
\begin{equation}\nonumber
\RE \int_{B(0,\frac{\delta}{\e})} \overline{G}^\perp \hat{\phi}^\perp=o(1).
\end{equation}
Define $B(\phi,\phi)=-\RE \int_{B(0,\frac{\delta}{\e})} L^0(\phi) \overline{\phi}$. From the result in Lemma A.1 in \cite{delPinoKowalczykMusso2006}, it follows that there exists a number $\alpha>0$ such that
\begin{equation}\label{4.14}
\alpha \left\{ \int_{B(0,\frac{\delta}{\e})} \frac{|\phi^\perp|^2}{1+r^2}+\int_{B(0,\frac{\delta}{\e})}|\RE(\phi^\perp w)|^2+ \int_{B(0,\frac{\delta}{\e})}|\nabla \phi^\perp|^2 \right\} \leq B(\phi^\perp, \phi^\perp),
\end{equation}
where the orthogonality conditions
\begin{equation}\nonumber
\RE \int_{B(0,1)} \phi^\perp \overline{w}_{x_j}=0, \ \ j=1,2,
\end{equation}
are used (note that $\int_{B(0,1)} \hat{\phi}^0 \overline{w}_{x_j}=0$ thanks to Fubini's theorem). Now, since $B(\hat{\phi}^\perp, \hat{\phi}^\perp)=o(1)$, it follows that
\begin{equation}\nonumber
\int_{B(0,2R_0)} |\hat{\phi}^\perp|^2=o(1)
\end{equation}
and elliptic estimates yield $\hat{\phi}^\perp \rightarrow 0$ in $W^{2,p}$-sense in $B(0,R_0)$. Let us now consider $\hat{\phi}^0=iw\hat{\psi}^0$. Then from the equation $L^0(\hat{\phi}^0)=G^0$ we obtain
\begin{equation}\nonumber
\left\{
\begin{array}{rcll}
\Delta \hat{\psi}^0+2\frac{\nabla w}{w}\nabla \hat{\psi}^0-2i|w|^2\hat{\psi}_2^0&=&H^0 \ \text{ in } B(0,\frac{\delta}{\e}), \\
\hat{\psi}^0&=&0  \ \text{ on } \p B(0,\frac{\delta}{\e}).
\end{array}
\right.
\end{equation}
This equation translates into the uncoupled system
\begin{equation}\nonumber
\Delta \hat{\psi}_1^0+\frac{2U'}{U}\frac{\p \hat{\psi}_1^0}{\p r}=H_1^0(r),
\end{equation}
\begin{equation}\nonumber
\Delta \hat{\psi}_2^0+\frac{2U'}{U}\frac{\p \hat{\psi}_2^0}{\p r}-2U^2 \hat{\psi}_2^0=H_2^0(r)
\end{equation}
for $0<r<\frac{\delta}{\e}$. The first equation, plus the boundary condition has the unique solution
\begin{equation}\label{formulaexplicitpsi1}
\hat{\psi}_1^0(r)=-\int_r^\frac{\delta}{\e}\frac{ds}{sU^2(s)}\int_0^sH_1^0(t)U^2(t)tdt.
\end{equation}
Since
\begin{equation}\nonumber
H_1^0(r)=\frac{o(1)}{|\log \e|r^{2+\sigma}}+o(\e^2) \ \ r>2,
\end{equation}
and
\begin{equation}\nonumber
H_1^0(r)=o(\frac{1}{|\log \e|})\frac{1}{r} \ \ r<2
\end{equation}
(the last equality is true because $U(r)=O(r)$ for $r<1$) it follows from the formula above that $\hat{\psi}_1^0(r)=o(1)$. On the other hand, a barrier (of the form $o(1)(r)$) shows that on $\hat{\psi}_2^0$ we have the estimate $\hat{\psi}_2^0(r)=o(1)r$. As a conclusion we finally derive
\begin{equation}\nonumber
\int_{B(0,2R_0)} |\hat{\phi}^0|^2+|\hat{\phi}^\perp|^2=o(1),
\end{equation}
and hence, from elliptic estimates, $\hat{\phi}_j \rightarrow 0$ in a $W^{2,p}$-sense on $B(0,R_0)$. This is a contradiction with \eqref{4.11}. We obtain that $\|\psi\|_* \rightarrow 0$ and this is a contradiction with $\|\psi\|_*=1$. The lemma is proven.
\end{proof}

We now consider  the following projected linear problem.
\begin{equation}\label{4.15}
\mathcal{L}^\e(\psi)=h+c_0\e^2\chi_{\O_\e \setminus \cup_{j=1}^k B(\xi_j',\frac{\delta}{\e})}+ \sum_{j,l} c_{jl}\frac{1}{iV_0\overline{\a_j}}w_{x_l}(y-\xi_j')\chi_{\{r_j<1\}} \ \text{ in } \O_\e,
\end{equation}
\begin{equation}\label{4.16}
\p_\nu \psi_1=0, \ \ \ \ \psi_2=g \ \text{ on } \p \O_\e,
\end{equation}
\begin{equation}\label{4.17}
\int_{\O_\e \setminus \cup_{j=1}^k B(\xi_j',\frac{\delta}{\e})} \psi_1=0, \ \ \ \RE \int_{|z|<1} \overline{\phi}_jw_{x_l}=0, \ \ \forall j=1,...,k, \ l=1,2
\end{equation}
with
\begin{equation}\nonumber
\phi_j(z)=iw(z) \psi(\xi_j'+z).
\end{equation}

Here we have called (with some abuse of notation) $w(z)=w^{\pm}(z)$ if $j\in I_{\pm}$. The following is the main result of this section.

\begin{proposition}\label{Prop4.1}
There exists a constant $C>0$, dependent on $\delta$ and $\O$ but independent of $c_0$, such that for all small $\e$ the following holds: if $\|h\|_{**}+\|g\|_{***}<+\infty$ then there exists a unique solution $\psi=T_\e(h,g)$ to problem \eqref{4.15}--\eqref{4.17}. Besides,
\begin{equation}\label{4.18}
\|T_\e(h,g)\|_* \leq C\left[ |\log \e| \|h\|_{**}+\|g\|_{***} \right].
\end{equation}
Moreover, the constants $c_{lj}$ admit the asymptotic expression
\begin{equation}\label{4.19}
c_{lj}=-\frac{1}{c_*}\RE \int_{\{ |z|<\delta/\e \}} \tilde{h}_j\overline{w}_{x_l}+O(\e \log \e)\|\psi\|_*+O(\e^2) \|h\|_{**},
\end{equation}
where $c_*=\int_{B(0,1)}|w_{x_m}|^2$ for $m=1,2$. Here
\begin{equation}\nonumber
\tilde{h}_j(z)=iw(z) h(\xi_j'+z).
\end{equation}
\end{proposition}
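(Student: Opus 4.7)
The plan is to treat the Lagrange-multiplier source terms in \eqref{4.15} as extra right-hand sides, use Lemma~\ref{lemma4.1} to control $\psi$ in terms of $\|h\|_{**}$, $\|g\|_{***}$ and the $c_{jl}$, compute the $c_{jl}$ by testing the equation against the approximate kernel elements $w_{x_l}(\,\cdot\,-\xi_j')$, and then invoke the Fredholm alternative for existence.

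\textbf{Step 1 (Translation near vortices).} Fix $j$ and pass to the variable $z=y-\xi_j'$ with $\phi_j(z)=iw(z)\psi(\xi_j'+z)$. Since $V_0=\alpha_j w$, one has $w/(V_0\overline{\alpha_j})=1/|\alpha_j|^2$, and the definition \eqref{defL_j} turns \eqref{4.15} on $B(0,\delta/\e)$ into
\[
L_j^\e(\phi_j)=\tilde h_j+\sum_{l'} c_{jl'}\,\frac{w_{x_{l'}}}{|\alpha_j|^2}\,\chi_{\{|z|<1\}},
\]
the $c_0\e^2$-term vanishing on this ball.

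\textbf{Step 2 (Formula for $c_{jl}$).} Multiply this identity by $\overline{w_{x_l}}(z)$, take the real part and integrate on $B(0,\delta/\e)$. The operator $L^0$ is symmetric for the bilinear form $\RE\int\cdot\,\overline{\cdot}$, and $L^0(w_{x_l})=0$ by translation invariance. Integration by parts gives
\[
\RE\int_{B(0,\delta/\e)}\! L^0(\phi_j)\,\overline{w_{x_l}}
=\RE\int_{\p B(0,\delta/\e)}\bigl[\p_\nu\phi_j\,\overline{w_{x_l}}-\phi_j\,\p_\nu\overline{w_{x_l}}\bigr]\,d\sigma.
\]
Combining the decay $|w_{x_l}|=O(r^{-1})$, $|\nabla w_{x_l}|=O(r^{-2})$ with the pointwise bounds on $\phi_j$ implied by $\|\psi\|_*$, this boundary contribution is of order $O(\e)\|\psi\|_*$. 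The remainder $L_j^\e-L^0$ in \eqref{eqL_j} consists, by \eqref{estimatesonalpha_j}, of terms of size $O(\e)\nabla\phi_j$ and $O(\e^2)\phi_j$; pairing with $\overline{w_{x_l}}$ and integrating over $B(0,\delta/\e)$ yields $O(\e|\log\e|)\|\psi\|_*$ (the logarithm arising from $\int_1^{\delta/\e}dr/r$). On the source side, the explicit form $w_\pm=U(r)e^{\pm i\theta}$ gives angular vanishing of $\RE(w_{x_1}\overline{w_{x_2}})$, while $\RE\int_{B(0,1)}|w_{x_l}|^2/|\alpha_j|^2=c_*(1+O(\e^2))$. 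Gathering all terms produces \eqref{4.19}.

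\textbf{Step 3 (Estimate, uniqueness, existence).} Applying Lemma~\ref{lemma4.1} directly to \eqref{4.15} with the Lagrange terms absorbed into the source (their $\|\cdot\|_{**}$-norm is bounded by $C\sum_{j,l}|c_{jl}|$, since $w_{x_l}$ is smooth on the compact support $\{r_j<1\}$ and $1/|\alpha_j|^2=1+O(\e^2)$) gives
\[
\|\psi\|_*\leq C\bigl[|\log\e|\,\|h\|_{**}+\|g\|_{***}\bigr]+C|\log\e|\sum_{j,l}|c_{jl}|.
\]
Inserting the bound of Step~2 and absorbing the resulting $C\e|\log\e|^2\|\psi\|_*$-contribution on the left for $\e$ small yields \eqref{4.18}, whence uniqueness. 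For existence one recasts \eqref{4.15}--\eqref{4.17} as $\psi=\mathcal{K}\psi+\mathcal{F}(h,g)$ on the closed Hilbert subspace cut out by the orthogonality constraints \eqref{4.17} and the boundary conditions \eqref{4.16}, where $\mathcal{K}$ is compact (inversion of the Laplacian against the lower-order perturbation, using Sobolev embeddings), and concludes by Fredholm's alternative.

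The delicate point is Step~2: the boundary terms at $r=\delta/\e$ and the remainder $L_j^\e-L^0$ must be tracked with the sharp powers of $\e$ and $|\log\e|$, and the angular vanishing of $\RE(w_{x_1}\overline{w_{x_2}})$ is essential to isolate $c_{jl}$ with leading coefficient $c_*$ so that the absorption step closes for small $\e$.
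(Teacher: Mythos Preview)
Your proof is correct and follows essentially the same route as the paper: translate to the $\phi_j$-variable near each vortex, test the equation against $\overline{w_{x_l}}$ on $B(0,\delta/\e)$ using $L^0(w_{x_l})=0$ and symmetry of $L^0$ to reduce the left-hand side to boundary terms plus the perturbation $L_j^\e-L^0$, then close the loop by feeding the resulting bound on $c_{jl}$ back into Lemma~\ref{lemma4.1} and absorb the $C\e|\log\e|^2\|\psi\|_*$ term. The only cosmetic difference is that the paper moves the perturbation $L_j^\e-L^0$ onto $w_{x_m}$ (writing $\RE\int\overline{\phi}_j(L_j^\e-L^0)w_{x_m}$) rather than keeping it on $\phi_j$ as you do; both orderings produce the same $O(\e|\log\e|)\|\psi\|_*$ via the integral $\int_1^{\delta/\e}\e\,dr/r$, and the paper spells out the Hilbert-space setup for the Fredholm step a bit more explicitly.
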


\begin{proof}
First we remark that for the existence part we can always assume that $g=0$ (up to modification of the function $h$ in the right hand side). We denote by $(R.H.S)$ the right hand side of Equation \eqref{4.15}. We express the problem in terms of $\phi=iV_0\psi$.
Equation \eqref{4.15} can be written as
\begin{equation}\nonumber
\Delta \phi +(\eta -1)\frac{E}{V_0}\phi+(1-|V_0|^2)\phi-2\RE(\overline{V_0}\phi)V_0=(R.H.S)iV_0 \ \ \ \text{ in } \O_\e.
\end{equation}
Let us set
\begin{eqnarray}\label{spaceH}
H:= \left\{  \phi=iV_0 \psi \in H^1(\O_\e); \ \psi_2=0 \ \text{ on } 
\p \O_\e, \phantom{\int_{\O_\e \setminus \cup_{j=1}^k B(\xi_j',\frac{\delta}{\e})} }\right.  \nonumber \\ \phantom{aaaaaaaaa} \left. \int_{\O_\e \setminus \cup_{j=1}^k B(\xi_j',\frac{\delta}{\e})} \psi_1=0 \ \text{ and } \RE \int_{\{|z|<1\}} \overline{\phi}_jw_{x_l}=0 \ \forall \ j,l \right\},
\end{eqnarray}
and $[ \phi, \varphi]:=\RE \int_{\O_\e} \nabla \phi \nabla \overline{\varphi}$. The space $H$ endowed with the inner product $[\cdot,\cdot]$ is an Hilbert space. Note that we do not need to add a term $\RE \int_{\O_\e} \phi \overline{\varphi}$ in the inner product because we have a zero condition boundary for $\psi_2$ and a zero average condition for $\psi_1$, thus the Poincar\'e inequality yields that $[ \cdot, \cdot ]$ is an inner product equivalent as the usual one on $H$. We denote by $\langle k(x)\phi, \cdot \rangle$ the linear form  defined for every $\varphi$ in $H$ by
\begin{eqnarray}
-\langle k(x)\phi, \varphi \rangle= \RE \int_{\O_\e}\left[ (\eta-1)\frac{E}{V_0}+(1-|V_0|^2) \right]\phi \overline{\varphi} \nonumber \\ - \RE\int_{\O_\e} 2\RE(\overline{V_0}\phi) V_0 \overline{\varphi} -\RE \int_{\p \O_\e} \frac{\overline{V_0}\p_\nu V_0}{|V_0|^2} \phi \overline{\varphi}.
\end{eqnarray}
We also denote by $\langle s, \cdot \rangle$ the linear form defined by on $H$ by
\begin{equation}\nonumber
\langle s, \varphi \rangle=-\RE \int_{\O_\e} (R.H.S)iV_0 \overline{\varphi} \ \ \ \forall \varphi \in H.
\end{equation}
We can see that the variational formulation of the problem leads to
\begin{equation}
[\phi,\varphi ]+ \langle k(x)\phi, \varphi \rangle =\langle s ,\varphi \rangle \ \ \ \forall \varphi \in H. 
\end{equation}
We can then use Riesz's representation theorem to rewrite the problem \eqref{4.15}--\eqref{4.17} in the following operational form:
\begin{equation}\nonumber
\phi+K(\phi)=S
\end{equation}
for some $S$ in $H$ which depends linearly in $s$ and some operator $K$ defined on $H$. Furthermore we can check that $K$ is compact. Fredholm alternative then yields the existence assertion, provided that the homogeneous equation only has the trivial equation. But this is a direct consequence of Lemma \ref{lemma4.1} if we establish the a priori estimate \eqref{4.19}
%
For $1\leq j\leq k$, in the region $\{|y-\xi_j'|\leq \frac{\delta}{\e}\}$, the equation \eqref{4.15} is equivalent  to
\begin{equation}\label{4.20}
L_j^\e(\phi_j)=\tilde{h}_j+\sum_{j,l} c_{jl}\frac{1}{|\a_j|^2}w_{x_l}\chi_{\{|z|<1\}}.
\end{equation}
Here we have denoted $\tilde{h}_j(z)=iw(z)h(\xi_j'+z)$. Multiplying Equation \eqref{4.20} by $\overline{w}_{x_m}(y-\xi_j')$, integrating all over $B(0,\frac{\delta}{\e})$ and taking real parts one gets,
\begin{equation}\label{4.21}
\RE \int_{B(0,\frac{\delta}{\e})} L_j^\e(\phi_j) \overline{w}_{x_m}=\RE \int_{B(0,\frac{\delta}{\e})}\tilde{h}_j\overline{w}_{x_m}+c_{jm}c^* + O\left(\sum_{j,l}|c_{jl}|\e^2\right)
\end{equation}
where we have set
\begin{equation}\label{4.22}
c_*=\int_{B(0,1)}|w_{x_m}|^2 (\text{ this quantity is the same for } m=1 \text{ and } m=2),
\end{equation}
where we have used that $|\a_j|^2=1+O(\e^2)$ and that the elements $w_{x_m}$ are orthogonal to each other (cf.\ formula (1.17) in \cite{delPinoFelmerKowalczyk}).
The desired result will follow from estimating the left-hand side of equality \eqref{4.21}. Integrating by parts we obtain
\begin{eqnarray}\label{4.23}
\RE\int_{B(0,\frac{\delta}{\e})} L_j^\e(\phi_j)\overline{w}_{x_m}&= &\RE \left\{ \int_{\p B(0,\frac{\delta}{\e})} \p_\nu \phi_j \overline{w}_{x_m}-\int_{\p B(0,\frac{\delta}{\e})} \phi_j \frac{\p}{\nu}w_{x_m}\right\} \nonumber \\
& +& \RE \int_{B(0,\frac{\delta}{\e})}\overline{\phi}_j(L_j^\e-L^0)w_{x_m}.
\end{eqnarray}
Using that $\phi_j=iw(z)\psi(\xi_j'+z)$, that $w(z)=U(r)e^{i\theta}$ with $r=|z|$ and $\theta$ the angle around $\xi_j'$, the decay at infinity of $U(r), U'(r), U''(r)$ (cf.\ estimates \eqref{estimeessurlesderivees}, \eqref{estimeesurladeriveeseconde}) and the definition of the norm $\|\cdot\|_*$ we get that
\begin{equation}\nonumber
 \left|\RE \left\{ \int_{\p B(0,\frac{\delta}{\e})} \p_\nu \phi_j \overline{w}_{x_m}-\int_{\p B(0,\frac{\delta}{\e})} \phi_j \p_{\nu}w_{x_m}\right\}\right| \leq C\e \| \psi\|_*.
\end{equation}

Using the definition of the operator $L^\e_j$ \eqref{defL_j} and the estimates on $\alpha_j$ \eqref{estimatesonalpha_j}, the remaining term in \eqref{4.23} can be estimated in the following way:
\begin{eqnarray}
\RE \int_{B(0,\frac{\delta}{\e})} (L_j^\e-L^0)w_{x_m}\overline{\phi}_j=\RE \int_{B(0,\frac{\delta}{\e})} \left( \nabla \alpha_j \nabla w_{x_m}+  \Delta \alpha_j w_{x_m}+O(\e^2)w_{x_m}\right)\overline{\phi}_j.
\end{eqnarray}
Thus we get
\begin{eqnarray}
\left|\RE \int_{B(0,\frac{\delta}{\e})} (L^\e_j-L^0)w_{x_m}\overline{\phi}_j \right| &\leq& C  \int_1^{\delta/\e}\left( \frac{\e}{r^2}+\frac{\e^2}{r}rdr\right)\|\phi_j\|_\infty+\|\psi\|_*+O(\e^2) \nonumber \\
& \leq & C |\log \e| \| \psi\|_*.\nonumber
\end{eqnarray}
Combining the above estimates we obtain: 
\begin{eqnarray}
|c_{jm}| \leq \frac{1}{c*} \int_{B(0,\frac{\d}{\e})} |\tilde{h}_j| |\overline{w}_{x_m}| +O(\e \log \e) \|\psi\|_* +O\left( \sum_{jl} |c_{jl}| \e^2\right) \nonumber \\
\sum_{jl} |c_{jl}|  \leq \left[ \frac{1}{c*} \sum_{jl} \int_{B(0,\frac{\d}{\e})} |\tilde{h}_j| |\overline{w}_{x_m}| +O(\e\log \e) \|\psi\|_* \right] (1+O(\e^2)).
\end{eqnarray}
If $\|h\|_{**} <+\infty$ we can check that $ \int_{B(0,\frac{\d}{\e})} |\tilde{h}_j| |\overline{w}_{x_m}| \leq C\|h\|_{**}$ and hence we obtain
\begin{equation}
c_{jm} =-\frac{1}{c^*} \RE \int_{B(0,\frac{\d}{\e})} \tilde{h}_j\overline{w}_{x_m} +O(\e \log \e)+ O(\e^2) \|h\|_{**}.
\end{equation}
In particular it follows that
\begin{equation}\label{4.24}
|c_{jl}| \leq C \left[ \|h\|_{**}+\e |\log \e| \| \psi\|_*\right].
\end{equation}
Now we can apply Lemma \ref{lemma4.1} to get
\begin{equation}\label{4.25}
\| \psi \|_* \leq C\left[ |\log \e| \|h\|_{**}+|\log \e| |c_{jl}| +\|g\|_{***} \right].
\end{equation}
Estimate \eqref{4.18} then follows combining \eqref{4.24} and \eqref{4.25}.
\end{proof}
\section{The projected nonlinear problem}

Our goal is to solve problem \eqref{fullproblem} for a suitable $\psi$. We first consider its projected version, for $\xi$ in $\O_\delta^k$,
\begin{eqnarray}\label{5.1}
\mathcal{L}^\e(\psi)=N(\psi)+R+\sum_{j,l} c_{jl}\frac{1}{ iV_0\overline{\a_j}} w_{x_l}(x-\xi_j')\chi_{\{r_j<1\}} \nonumber \\
\phantom{aaaaaaaaaaaaaaaaaaaaaaaaaaaaaaaaaaaa} +c_0\e^2\chi_{\O_\e \setminus \cup_{j=1}^k B(\xi_j',\frac{\delta}{\e})} \ \text{ in } \O_\e,
\end{eqnarray}
\begin{equation}\label{5.2}
\p_\nu \psi_1=0, \ \ \ \ \ \ \ \psi_2 =-\log |V_0| \ \text{ on } \p \O_\e,
\end{equation}
\begin{eqnarray}\label{5.3}
\int_{\O_\e \setminus \cup_{j=1}^k B(\xi_j',\frac{\delta}{\e})} \psi_1=0,  \ \ \ \ \RE \int_{|y|<1} \overline{\phi}_jw_{x_l}=0, \ \forall \ j,l, \nonumber \\
\phi_j(z)=iw(z)\psi(\xi_j'+z).
\end{eqnarray}
\begin{proposition}\label{Prop5.1}
There is a constant $C>0$ depending only on $\delta$ and $\O$ such that for all points $\xi \in \O_\delta^k$ and $\e$ small, problem \eqref{5.1}--\eqref{5.3} possesses a unique solution with
\begin{equation}\nonumber
\|\psi\|_* \leq C \e^{1-\sigma}.
\end{equation}
Moreover, we have that 
\begin{equation}\label{formulac0}
c_0\e^2\int_{\O_\e \setminus \bigcup_{j=1}^k B(\xi_j',\frac{\d}{\e})}1 = \sum_{j,l} c_{jl}\IM \int_{\{r_j<1\}} \overline{\phi_j}w_{x_l}.
\end{equation}
\end{proposition}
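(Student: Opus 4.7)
The plan is to solve \eqref{5.1}--\eqref{5.3} as a fixed-point problem for the map
\[\mathcal{T}(\psi):=T_\e\bigl(R+N(\psi),\,-\log|V_0|\bigr),\]
where $T_\e$ is the inverse of the linear projected problem supplied by Proposition \ref{Prop4.1}. The first step is to estimate the source terms. Using the pointwise bounds on $R_1,R_2$ from Lemma \ref{Lemma2.1}(1) together with the fact that $r_j=O(\e^{-1})$ on $\O_\e$, one finds $\|r_j^{2+\sigma}R_1\|_\infty=O(\e)$ and $\|r_j^{1+\sigma}R_2\|_\infty=O(\e^{1-\sigma})$, hence $\|R\|_{**}\leq C\e^{1-\sigma}$. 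Similarly, combining Lemma \ref{Lemma2.1}(2) with a standard interpolation to control $[\nabla G]_\gamma\leq C\e^{3+\gamma}$, one obtains $\|{-}\log|V_0|\|_{***}\leq C\e^{1-\sigma}$.

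Next, I would verify that $N$, defined piecewise by \eqref{nonlinearoperatorN} near the vortices and by $-i(\nabla\psi)^2+i|V_0|^2(1-e^{-2\psi_2}-2\psi_2)$ in the outer region, is genuinely quadratic in $\psi$:
\[\|N(\psi)\|_{**}\leq C\|\psi\|_*^{2}, \qquad \|N(\psi_1)-N(\psi_2)\|_{**}\leq C(\|\psi_1\|_*+\|\psi_2\|_*)\|\psi_1-\psi_2\|_*,\]
on any ball of small $\|\cdot\|_*$-radius. The outer inequality follows immediately from the quadratic structure and the definition of $\|\cdot\|_*$; in the inner region one estimates $\tilde N_j$ in $L^p(|z|<3)$ by the $W^{2,p}$-component of $\|\cdot\|_*$ together with the Sobolev embedding $W^{2,p}\hookrightarrow C^1$. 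Combined with Proposition \ref{Prop4.1}, these estimates make $\mathcal{T}$ a contraction on $\{\|\psi\|_*\leq C|\log\e|\e^{1-\sigma}\}$, since the contraction factor is $O(|\log\e|^2\e^{1-\sigma})\to0$. Banach's theorem then yields a unique fixed point with $\|\psi\|_*\leq C|\log\e|\e^{1-\sigma}$, which is absorbed into $C\e^{1-\sigma}$ after a harmless adjustment of $\sigma$ in the definition of the norms.

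For the identity \eqref{formulac0}, the idea is to test the equivalent $v$-equation against $\bar v$. The ansatz \eqref{ansatz} together with $\p_\nu\psi_1=0$ on $\p\O_\e$ yields $v\wedge\p_\nu v=0$ on $\p\O_\e$ (as derived in Section 3); combined with the identity $\dive(v\wedge\nabla v)=\IM(\bar v\Delta v)$ and the fact that $\IM(\bar v\,(1-|v|^2)v)=0$, this gives
\[\int_{\O_\e}\IM\bigl(\bar v\,[\Delta v+(1-|v|^2)v]\bigr)\,dy=0.\]
Translating \eqref{5.1} region by region into a $v$-equation (using $v=V_0e^{i\psi}$ outside the balls and $v=\alpha_j(w+\phi_j)$ near each vortex), the right-hand side of this $v$-equation equals $\sum_l c_{jl}\,w_{x_l}/\bar\alpha_j$ on each $\{r_j<1\}$ and $iV_0 c_0\e^2 e^{i\psi}$ on $\O_\e\setminus\bigcup_j B(\xi_j',\delta/\e)$, and vanishes elsewhere. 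Using $\bar v=\bar\alpha_j(\bar w+\bar\phi_j)$ near a vortex, together with $\IM\int_{|z|<1}\bar w\,w_{x_l}\,dz=0$ (by rotational symmetry of $w_\pm$), reduces the near-vortex contribution to $c_{jl}\IM\int_{|z|<1}\bar\phi_j w_{x_l}$. In the outer region, $\bar v\cdot iV_0 e^{i\psi}=i|v|^2$; since $|v|^2\to 1$ away from the vortices, the outer contribution equals $c_0\e^2\int_{\O_\e\setminus\cup B_j}1$ to leading order, which combined with the near-vortex contribution yields \eqref{formulac0}.

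The main obstacle, in my view, is the careful bookkeeping of the three regions (inner, intermediate, outer) so that the contributions from the cutoff transition $1<r_j<2$ and from the interior boundary layer near $\p B(\xi_j',\delta/\e)$ do not contaminate the computation leading to \eqref{formulac0}; this mirrors the corresponding argument in \cite{delPinoKowalczykMusso2006}.
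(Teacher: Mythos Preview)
Your proposal is correct and follows essentially the same route as the paper: a contraction-mapping argument via $T_\e$ with the estimates $\|R\|_{**},\|\log|V_0|\|_{***}=O(\e^{1-\sigma})$ and the quadratic bound on $N$ for existence/uniqueness, and then testing the resulting $v$-equation against $\bar v$ (using $v\wedge\p_\nu v=0$ on $\p\O_\e$ and $\int_{|z|<1}\bar w\,w_{x_l}=0$) for the identity \eqref{formulac0}. Your hedge ``to leading order'' for the outer contribution is in fact appropriate: the paper's own derivation produces $c_0\e^2\int_{\text{out}}|v|^2$ rather than $c_0\e^2\int_{\text{out}}1$, and it is the $|v|^2$ version that is actually invoked later in the variational-reduction step.
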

\begin{proof}
The boundary condition for $\psi$ are $\p_\nu \psi_1=0$ and $\psi_2=-\log |V_0|$. We can see, with the help of Lemma \ref{Lemma2.1},  that
\begin{equation}\nonumber
\|\log |V_0|\|_\infty=O(\e^2), \ \ \ \ \|\nabla \log |V_0|\|_\infty=O(\e^3) \ \ \ \text{ and } [\nabla \log |V_0|]_{\gamma,\p \O_\e} =O(\e^4) 
\end{equation}
on $\p \O_\e$ (see Lemma \ref{Lemma2.1}). Thus we have $\| \log |V_0| \|_{***}=O(\e^{1-\sigma})$. As for the error $R=R_1+iR_2$, Lemma \ref{Lemma2.1} yields
\begin{equation}\nonumber
R_1=O\left(\e\sum_{j=1}^k \frac{1}{r_j^3}\right), \ \ \ \ R_2=O\left(\e \sum_{j=1}^k \frac{1}{r_j}\right)
\end{equation}
if $r_j>1$ for all $1\leq j \leq k$. Calling $\tilde{R}_j$ the error in $\phi_j$-coordinates (see \eqref{defRj}) we also find
\begin{equation}\nonumber
\| \tilde{R}_j\|_{L^p(|z|<3)}=O(\e),
\end{equation}
and then we conclude
\begin{equation}\nonumber
\|R\|_{**} \leq C\e^{1-\sigma}.
\end{equation}
Here and in what follows $C$ denotes a generic constant independent of $\e$. We make the following claim: if $\|\psi\|_* \leq C\e^{1-\sigma}$ then $\|N(\psi)\|_{**}\leq C \e^{2-2\sigma}$. In fact, for $r_j>2$ for all $j$, $N(\psi)$ reduces to
\begin{equation}\nonumber
N(\psi)_1=-2\nabla \psi_1 \nabla \psi_2, \ \ \ N(\psi)_2=|\nabla \psi_1|^2-|\nabla \psi_2|^2+|V_0|^2(e^{-2\psi_2}+1-2\psi_2)
\end{equation}
(see \eqref{pblineariseloindesvortex}). The definitions of the $*$-norm yields that in this region
\begin{equation}\nonumber
|N(\psi)_1|\leq C\e^{2-2\sigma}\frac{1}{r_j^{2+\sigma}}, \ \ \ |N(\psi)_2|\leq C \e^{2-2\sigma}\frac{1}{r_j^2}.
\end{equation}
On the other hand, calling $\tilde{N}_j(\phi_j)$ the operator in the $\phi_j$-variable, as defined in \eqref{defNj} we see that
\begin{eqnarray}\nonumber
\tilde{N}_j(\phi_j) & =& A_1(z,\phi_j,\nabla \phi_j)+(1-\tilde{\eta}) w \Delta \left[ e^{\frac{\phi_j}{w}}-1-\frac{\phi_j}{w}\right ] \nonumber \\
&=& A_1(z,\phi_j,\nabla \phi_j) + A_2(z,\phi_j,\nabla \phi_j) +(1-\tilde{\eta}) \Delta \phi_j (e^{\frac{\phi_j}{w}}-1)
\end{eqnarray}
where $A_i$ are smooth functions of their arguments, with $A_2$ supported only for $|z|>1$, and with
\begin{equation}
|A_1(z,p,q)|\leq C\left[ |p|^2+|q|^2\right], \ \ \ |A_2(z,p,q)| \leq C\left[ |p|^2+|q|^2\right]
\end{equation}
near $(p,q)=(0,0)$. By assumption we have
\begin{equation}\nonumber
\| \phi_j\|_{W^{2,p}(|z|<2)} \leq C \e^{1-\sigma},
\end{equation}
we then use the Sobolev injection $W^{2,p} \hookrightarrow C^1$ for $p>2$ to deduce that $\|\phi_j\|_{L^\infty(|z|<3)} \leq C\e^{1-\s}$ and $\|\nabla \phi_j\|_{L^\infty(|z|<3)} \leq C\e^{1-\s}$. We then have 
\[ \|A_1(z,\phi_j,\nabla \phi_j) +A_2(z,\phi_j,\nabla \phi_j) \|_{L^p(|z|<3)} \leq C\e^{2-2\s}. \]
But since $|e^{x}-1|\leq x$ for $x$ near zero, we also have 
\begin{eqnarray}
\| (1-\tilde{\eta}) \Delta \phi_j (e^{\frac{\phi_j}{w}}-1) \|_{L^p(|z|<3)} & \leq & \|\frac{\phi_j}{w} \|_{L^\infty (1<|z|<3)} \| \Delta \phi_j \|_{L^p(|z|<3)} \nonumber \\
 & \leq & C \e^{1-\s}\times\e^{1-\s} \\
 & \leq & C \e^{2-2\s}.
\end{eqnarray}

On the other hand, it is also true that if $\|\psi^l\|_* \leq C \e^{1-\sigma}$ for $l=1,2$ then
\begin{equation}\nonumber
\| N(\psi^1)-N(\psi^2)\|_{**} \leq C\e^{\frac{1-\sigma}{2}}\| \psi^1-\psi^2\|_*.
\end{equation}
Problem \eqref{5.1}--\eqref{5.3} is equivalent to the fixed point problem
\begin{equation}\nonumber
\psi=T_\e(N(\psi)+R, \log |V_0|),
\end{equation}
where $T_\e$ is the linear operator introduced in Proposition \ref{Prop4.1}. Since $\|T_\e\|=O(\log \e)$, the above estimates yield a unique solution with size $\|\psi\|_* \leq C\e^{1-\sigma}$. Hence we have proven the existence of a unique solution in this range for problem \eqref{5.1}--\eqref{5.3}.

We now prove the formula \eqref{formulac0} for $c_0$. If $\psi$ satisfies \eqref{5.1}--\eqref{5.3} then the ansatz $v$ given by \eqref{ansatz} satisfies
\begin{equation}\nonumber
\Delta v+\left( 1-|v|^2\right)v=c_0\e^2iv\chi_{\text{out}}+ \sum_{j,l} c_{jl}\frac{1}{\overline{\a_j}}w_{x_l}(y-\xi_j')\chi_{\{r_j<1\}},
\end{equation}
where $\chi_{out}=\chi_{\O_\e \setminus \cup_{j=1}^k B(\xi_j',\frac{\delta}{\e})}$. In $\O_\e \setminus \cup_{j=1}^k B(\xi_j',\frac{\delta}{\e})$. Indeed we used that the ansatz take the form $v=V_0e^{i\psi}$ in the region $\O_\e \setminus \cup_{j=1}^k B(\xi_j',\frac{\delta}{\e})$ and that 
\begin{eqnarray}
-\Delta v +(1-|v|^2)v =  iV_0\left [ \mathcal{L}^\e(\psi) -R +N(\psi) \right]e^{i\psi}
\end{eqnarray} in that region. We also used that 
\begin{eqnarray}
-\Delta v +(1-|v|^2)v =  iV_0\left [ \mathcal{L}^\e(\psi) -R +N(\psi) \right]
\end{eqnarray} in the complement of that region. 
%

Multiplying the above relation by $\overline{v}$, using that for $r_j<1$ we have that $v(\xi_j'+z)=\alpha_j(z)\left[ w(z)+\phi_j(z)\right]$ and integrating we get that
\begin{equation}\label{equality0}
-\int_{\O_\e}|\nabla v|^2+\int_{\O_\e}(1-|v|^2)|v|^2+\int_{\p \O_\e} \overline{v} \p_\nu v= \mathcal{R}
\end{equation}
with
\begin{eqnarray}
\mathcal{R}:= ic_0\e^2\int_{\O_\e} |v|^2\chi_{\text{out}}
+\sum_{j,l}c_{jl}\int_{\O_\e}\left( \overline{w}w_{x_l}+\overline{\phi_j}w_{x_l}  \right)\chi_{\{r_j<1\}}.\nonumber
\end{eqnarray}
We observe that $\int_{\{r_j<1\}} \overline{w} w_{x_l}=0$ due to the form of $w$ and $w_{x_l}$. Now using the boundary condition $v\wedge \p_\nu v=\IM (\overline{v}\p_\nu v)=0$ on $\p \O_\e$ we see that the left hand side of \eqref{equality0} is real-valued. Thus we must have 
\begin{equation}
\e^2\int_{\O_\e \setminus \bigcup_{j=1}^k B(\xi_j',\frac{\d}{\e})} c_0= \sum_{j,l} c_{jl}\IM \int_{\{r_j<1\}} \overline{\phi_j}w_{x_l}.
\end{equation}
\end{proof}
%

As explained in \cite{delPinoKowalczykMusso2006}, the function $\psi(\xi)$ turns out to be continuously differentiable.
We have that a solution $v(\xi)$ given by Proposition \ref{Prop5.1} is a solution of our problem if and only if the constants $c_{jl}$ are equals to zero. We thus need to adjust $\xi$ in $\mathcal{D}$ in such a way that $c_{jl}=0$ for all $j,l$ in \eqref{5.1}--\eqref{5.3}. We will see that this problem is equivalent to a variational problem which is very close to the one of finding critical points of the renormalized energy. In the conclusion we give the expression of the renormalized energy as computed in \cite{delPinoKowalczykMusso2006}, we formulate the variational reduction and we indicate that the rest of the proof follows exactly the same line as in \cite{delPinoKowalczykMusso2006}.

\section{Conclusion}
Let $\Gamma_0$ be the outer component of $\p \O$, and let us denote by $\Gamma_l, \ l=1,...,n$, its inner components, if any. Let us call $\phi_l(x)$ the solution of the following problem
\begin{equation}\nonumber
\left\{
\begin{array}{rclll}
\Delta \phi_l&=&0 & \text{ in } \O, \\
\phi_l &=&\delta_{lj} & \text{ on } \Gamma_j, \ \forall j=1,...k.
\end{array}
\right.
\end{equation}

Let $G_0(x,\xi)$ denote the Green's function for the problem
\begin{equation}\nonumber
\left\{
\begin{array}{rclll}
-\Delta G_0&=&2\pi \delta_{\xi_j} & \text{ in } \O, \\
G_0(x,\xi)&=&0 & \text{ on }  \p \O,
\end{array}
\right.
\end{equation}
and $H_0(x,\xi)$ its regular part,
\begin{equation}\nonumber
H_0(x,\xi) =\log \frac{1}{|x-\xi|}-G_0(x,\xi).
\end{equation}
We set $\gamma_l:= 2\pi \left( \int_{\Gamma_l} \p_\nu \phi_l\right)^{-1}$ and we let
\begin{equation}
G(x,\xi)=\sum_{l=1}^n \gamma_l \phi_l(\xi)\phi_l(x)+G_0(x,\xi),
\end{equation}
where the sum is understood to be zero if the domain is simply connected. Consistently we let
\begin{equation}\nonumber
H(x,\xi)=-\sum_{l=1}^n \gamma_l \phi_l(\xi)\phi_l(x)+H_0(x,\xi).
\end{equation}
Then we have (cf.\ \cite{delPinoKowalczykMusso2006})
\begin{equation}\nonumber
W_N(\xi,d)=\pi \sum_{i\neq j} d_i d_jG(\xi_i, \xi_j)-\sum_{j=1}^k\pi H(\xi_j,\xi_j).
\end{equation}
Calling $w_{N\e}$ the same function as in \eqref{approximation}  we have
\begin{eqnarray}\label{renormalizede}
E_\e(w_{N_\e}(\cdot,\xi,\mathtt{d}))&=&k\pi \log \frac{1}{\e}+W_N(\xi,\mathtt{d})+c+O(\e) \nonumber \\
\nabla_\xi E_\e(w_{N_\e}(\cdot,\xi,\mathtt{d}))&=&\nabla_\xi W_N(\xi,\mathtt{d})+O(\e) \nonumber
\end{eqnarray}
with $c$ a constant which depends on the number $k$ of points. \\

Now we consider the equations $c_{jl}(\xi)=0$ in \eqref{5.1}--\eqref{5.3} for the solution $\psi=\psi(\xi)$ predicted by Proposition . We denote by $v(\xi)$ the ansatz for this $\psi$ and consider the functional
\begin{equation}
P_\e(\xi)=E_\e(v(\xi)).
\end{equation}

As in \cite{delPinoKowalczykMusso2006} we can see that solving $c_{jl}(\xi)=0$ for all $j,l$ is equivalent to finding critical points of $P_\e$. Besides $P_\e$ is close to the renormalized energy in a $C^1$-sense. 
\begin{proposition}
\begin{itemize}
\item[a)] If $\nabla_\xi P_\e(\xi)=0$ then $c_{jl}(\xi)=0$ for all $j,l$, and hence $c_0=0$.
\item[b)] We have the validity of the expansion
\begin{equation}
\nabla_\xi P_\e(\xi)=\nabla_\xi W_N(\xi,d)+O(\e^{1-\sigma}\log \e),
\end{equation}
uniformly on $\xi$ in $\O_\delta^k$.
\end{itemize}
\end{proposition}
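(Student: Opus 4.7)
My plan is to compute $\nabla_\xi P_\e(\xi)$ by the chain rule and identify it, up to small error, as a nondegenerate linear combination of the multipliers $c_{jl}$. Since $v(\xi)$ satisfies the projected equation
\[
\Delta v+(1-|v|^2)v = c_0\e^2\, i v\,\chi_{\mathrm{out}} + \sum_{j,l} c_{jl}\,\frac{1}{\overline{\alpha_j}}\,w_{x_l}(y-\xi_j')\,\chi_{\{r_j<1\}},
\]
with boundary conditions $|v|=1$ and $v\wedge\p_\nu v=0$ on $\p\O_\e$, integration by parts (whose boundary contribution vanishes thanks to the semi-stiff condition exactly as in Lemma~\ref{Lemma2.1}) gives
\[
\p_{\xi_{jm}}P_\e(\xi) = -\mathrm{Re}\int_{\O_\e}\!\Big[c_0\e^2\, iv\,\chi_{\mathrm{out}} + \sum_{j',l} c_{j'l}\,\tfrac{1}{\overline{\alpha_{j'}}}w_{x_l}(\cdot-\xi_{j'}')\chi_{\{r_{j'}<1\}}\Big]\,\overline{\p_{\xi_{jm}}v}.
\]
The key point is that $\p_{\xi_{jm}}v = -\e^{-1}\p_{x_m}w(\cdot-\xi_j')+$ terms that are small in the relevant pairing. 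Hence, using the orthogonality $\int_{|z|<1}w_{x_l}\,\overline{w}_{x_m}=c_*\,\delta_{lm}$ from Proposition~\ref{Prop4.1} and the estimate $|\alpha_{j'}|^2=1+O(\e^2)$, one finds
\[
\p_{\xi_{jm}}P_\e(\xi) = \frac{c_*}{\e}\,c_{jm} + O(\e)\sum_{j',l}|c_{j'l}| + o(\e)|c_0|.
\]
The coefficient matrix is diagonally dominant, hence invertible for $\e$ small, so $\nabla_\xi P_\e(\xi)=0$ forces $c_{jl}=0$ for every $j,l$. Plugging this into formula \eqref{formulac0} then gives $c_0\,\e^2\,|\O_\e\setminus\bigcup B(\xi_j',\d/\e)|=0$, whence $c_0=0$.

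\textbf{Part (b).} I would split $P_\e(\xi)=E_\e(v(\xi))$ into the energy of the first approximation plus the correction produced by the perturbation $\psi=\psi(\xi)$ from Proposition~\ref{Prop5.1}. Writing $v=V_0+i V_0\psi+\gamma_1$ (in rescaled variables), Taylor expansion around $V_0$ gives
\[
E_\e(v) = E_\e(V_0) + \mathrm{Re}\int_{\O_\e}\!\!\Big[-\bigl(\Delta V_0+(1-|V_0|^2)V_0\bigr)\Big]\overline{(iV_0\psi+\gamma_1)} + \text{boundary terms} + Q(\psi),
\]
where $Q(\psi)$ collects the quadratic and higher-order terms and the boundary contributions are handled via Lemma~\ref{Lemma2.1} (which gives $F=0$ and $G=O(\e^2)$). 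Differentiating in $\xi$ and using the already-established expansion
\[
\nabla_\xi E_\e(w_{N\e}(\cdot,\xi,\mathtt{d})) = \nabla_\xi W_N(\xi,\mathtt{d}) + O(\e)
\]
takes care of the leading term. For the correction term, the main input is the error bound $\|E\|\lesssim\e$ from Lemma~\ref{Lemma2.1} combined with the a priori bound $\|\psi(\xi)\|_*\le C\e^{1-\sigma}$ and the $C^1$ dependence of $\psi$ on $\xi$, with the differentiated map obeying $\|\p_\xi\psi\|_*\le C\e^{-\sigma}|\log\e|$ (one loses a factor $|\log\e|$ because the inverse $T_\e$ in Proposition~\ref{Prop4.1} loses one). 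Pairing these two bounds yields a contribution of order $\e\cdot\e^{-\sigma}|\log\e|=\e^{1-\sigma}|\log\e|$, which is exactly the claimed error. The quadratic remainder $Q(\psi)$ contributes at worst $\|\psi\|_*^2\cdot\|\p_\xi\psi\|_*\lesssim\e^{2-3\sigma}|\log\e|$, absorbed into $O(\e^{1-\sigma}\log\e)$ after shrinking $\sigma$.

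\textbf{Main obstacles.} The two delicate points are (i) verifying that the $c_0$-term and the boundary terms in the expression for $\p_{\xi_{jm}}P_\e(\xi)$ produce only $o(\e)$ contributions---this is where the semi-stiff boundary condition $v\wedge\p_\nu v=0$ plays a crucial role by killing the principal boundary integral exactly as in Lemma~\ref{Lemma2.1}, and is the one genuinely new ingredient compared with the Neumann case of \cite{delPinoKowalczykMusso2006}; and (ii) obtaining the $C^1$-dependence $\|\p_\xi\psi(\xi)\|_*=O(\e^{-\sigma}|\log\e|)$, which requires differentiating the fixed-point equation $\psi=T_\e(N(\psi)+R,\log|V_0|)$ with respect to $\xi$ and reapplying the linear estimate of Proposition~\ref{Prop4.1} to $\p_\xi\psi$. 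Once these are in hand, the remainder of the reduction---the translation of $\nabla_\xi P_\e=0$ into the Lyapunov--Schmidt system and then into the existence statements of Theorems~\ref{theorem1}--\ref{theorem2}---goes through verbatim as in \cite{delPinoKowalczykMusso2006}.
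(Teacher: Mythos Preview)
Your approach to part (a) is essentially the paper's: compute $\p_{\xi}P_\e$ by the chain rule, use $|v|=1$ and $v\wedge\p_\nu v=0$ to kill the boundary integral, identify the principal pairing $\int_{|z|<1}w_{x_l}\overline{w}_{x_m}=c_*\delta_{lm}$, and conclude by diagonal dominance together with \eqref{formulac0}. (The paper obtains error terms of size $O(\e^{1-\sigma})$ rather than your $O(\e)$, but this is immaterial for the invertibility.)

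For part (b) your route diverges from the paper's. You Taylor-expand $E_\e(v)$ around $E_\e(V_0)$ and then differentiate term by term in $\xi$, which forces you to control $\p_\xi\psi$ quantitatively; you state $\|\p_\xi\psi\|_*\le C\e^{-\sigma}|\log\e|$ and pair it against $\|E\|\lesssim\e$. The paper avoids this entirely: it re-uses the identity derived in part (a),
\[
\p_{\xi'_{jl}}P_\e(\xi)=-c^*\,c_{jl}-\sum_{j',l'}c_{j'l'}\,O(\e^{1-\sigma}),
\]
and then evaluates $c_{jl}$ directly via the asymptotic formula \eqref{4.19} of Proposition~\ref{Prop4.1} with $h=N(\psi)+R$. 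The $N(\psi)$-contribution is $O(\e^{2-2\sigma})$ by $\|N(\psi)\|_{**}\le C\e^{2-2\sigma}$, and the $R$-contribution is computed to equal $-\p_{\xi'_{jl}}E_\e(V_0)+O(\e^{2-\sigma})$, which by \eqref{renormalizede} gives $\nabla_\xi W_N+O(\e)$ after unscaling. No derivative of $\psi$ in $\xi$ is ever needed.

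What each approach buys: the paper's method is shorter and uses only ingredients already established (the linear formula \eqref{4.19} and the bound $\|\psi\|_*\le C\e^{1-\sigma}$). Your method is conceptually straightforward but adds a genuine technical step, namely the bound on $\|\p_\xi\psi\|_*$, which requires differentiating the fixed-point relation and coping with the $\xi$-dependence of $T_\e$ itself (through $V_0$, the cut-offs, and the projection conditions). That step can be carried out---it is done in \cite{delPinoKowalczykMusso2006}---but it is heavier than what the paper actually uses here. If you keep your approach, you should justify the claimed bound on $\p_\xi\psi$ explicitly rather than treat it as a black box.
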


\begin{proof}
We let $\xi=(\xi_1,...,\xi_k)$, $\xi_j=(\xi_{j1},\xi_{j2})$ and $\xi_j'=\frac{\xi}{\e}=(\xi_{j1}',\xi_{j2}')$. We then have
\begin{eqnarray}
-\p_{\xi'_{j_0i_0}} P_\e(\xi)&=&-J_\e'(v(\xi)). \p_{\xi'_{j_0i_0}}v \nonumber \\
 &=& \RE \int_{\O_\e} -\nabla v \nabla \overline{v}_{\xi'_{j_0i_0}}+(1-|v|^2)v\overline{v}_{\xi'_{j_0i_0}} \nonumber \\
 &=& \RE \int_{\O_\e} \left( \Delta v+(1-|v|^2)v \right)\overline{v}_{\xi'_{j_0i_0}}- \RE \int_{\p\O_\e} \p_\nu v \overline{v}_{\xi'_{j_0i_0}}. \nonumber 
\end{eqnarray}
Now since $|v(\xi)|=1$ on $\p \O_\e$ for all $\xi$ in $\O_\d$ , we have $\RE (v\p_{\xi'_{j_0i_0}} 
\overline{v})=0$ on $\p \O_\e$, geometrically $v$ and $\p_{\xi'_{j_0i_0}} v$ are orthogonal on $\p \O_
\e$. But by using the boundary condition $v \wedge \p_\nu v=0$ on $\p \O_\e$ ($v$ and $\p_\nu v$ are 
parallel on $\p \O_\e$) we find $\RE ( \p_\nu v \overline{v}_{\xi_{j_0i_0}})=0$ on $\p \O_\e$. Thus 
\begin{eqnarray}
-\p_{\xi'_{j_0i_0}} P_\e(\xi)&=& \RE \int_{\O_\e} \left( \Delta v+(1-|v|^2)v \right)\overline{v}_{\xi'_{j_0i_0}} \nonumber \\
&=& \RE  \left( ic_0 \e^2 \int_{\O_\e \setminus \bigcup_{j=1}^k B(\xi_j',\frac{\d}{\e})} v \overline{v}_{\xi'_{j_0l_0}} \right)+ \sum_{l,j} c_{jl} \RE \int_{\{|z|<1 \}} \frac{1}{\overline{\a_j}} w_{x_l}(z) \overline{v}_{\xi'_{j_0i_0}}(\xi_j'+z). \nonumber
\end{eqnarray}

Now near $\xi_j'$ we have
\begin{eqnarray}
\p_{\xi'_{j_0i_0}} v(y)&=& \p_{\xi'_{j_0i_0}} \left[ \alpha_j(y-\xi_j',\xi) \left( w(y-\xi_j')+\phi_j(y-\xi_j',\xi) \right) \right ] \nonumber \\
&=& \left( \p_{\xi'_{j_0i_0}} \alpha_j\right)(w+\phi)+\alpha_j (\p_{\xi'_{j_0i_0}}\phi_j) -\d_{jj_0} \p_{z_{i_0}}(\alpha_j w+\alpha_j \phi_j).
\end{eqnarray}
We observe that $\p_z \alpha_j$ and $\p_{\xi'} \alpha_j$ are of order $O(\e)$ in $\{|z|<1\}$ (cf.\ \eqref{estimatesonalpha_j}). Besides $\phi_j$ and $\p_z \phi_j$ are of order $O(\e^{1-\s})$ in $L^\infty$ norm. To see that we use the Sobolev injection $W^{2,p} \hookrightarrow C^1$ and $\|\phi_j\|_{W^{2,p}(|z|<3)}\leq C\e^{1-\s}$. We also know that \[\RE \int_{B(0,1)}\phi_j(z,\xi) \overline{w}_{x_l}(z) dz=0  \text{ for all }\xi \text{ in } \O_\d,\] thus \[ \RE \int_{B(0,1)}\p_{\xi'_{j_0i_0}}\phi_j(z,\xi) \overline{w}_{x_l}(z) dz=0.\] Besides 
\[ \RE \int_{B(0,1)} \overline{w}_{z_{i_0}} w_{x_l}=c^*\d_{i_0l} \text{ with } c^*=\int_{B(0,1)} |w_{z_1}|^2dz. \]

We thus find
\[-\p_{\xi'_{j_0i_0}}P_\e(\xi)=\RE \left( ic_0 \e^2 \int_{\O_\e \setminus \bigcup_{j=1}^k B(\xi_j',\frac{\d}{\e})} v \overline{v}_{\xi'_{j_0l_0}} \right)+\sum_{l,j} c_{jl} \left(c^* \d_{jj_0}\d_{i_0 l} +O(\e^{1-\s}) \right).\]
Now we use that $v=V_0e^{i\psi}$ in $\O_\e \setminus \bigcup_{j=1}^k B(\xi'_j,\frac{\d}{\e})$ and we get that
\begin{eqnarray}
\IM  \int_{\O_\e \setminus \bigcup_{j=1}^k B(\xi_j',\frac{\d}{\e})} v \overline{v}_{\xi'_{j_0i_0}} = \IM \int_{\O_\e \setminus \bigcup_{j=1}^k B(\xi_j',\frac{\d}{\e})}  V_0 \p_{\xi'_{j_0i_0}} \overline{V_0}  -i |V_0|^2 \p_{\xi'_{j_0i_0}} \overline{\psi} . \nonumber
\end{eqnarray}
We first observe that (see formula \eqref{gradientV0}) 
\begin{eqnarray}
\RE  \int_{\O_\e \setminus \bigcup_{j=1}^k B(\xi_j',\frac{\d}{\e})} V_0 \p_{\xi'_{j_0i_0}} \overline{V_0} =O(\e)
\end{eqnarray}
We also have  
\begin{eqnarray}
\RE  \int_{\O_\e \setminus \bigcup_{j=1}^k B(\xi_j',\frac{\d}{\e})} |V_0|^2 \p_{\xi'_{j_0i_0}}\overline{ \psi}  &=& \RE  \int_{\O_\e \setminus \bigcup_{j=1}^k B(\xi_j',\frac{\d}{\e})} |V_0|^2 \p_{\xi'_{j_0i_0}}\psi_1 \nonumber \\
&=&  \int_{\O_\e \setminus \bigcup_{j=1}^k B(\xi_j',\frac{\d}{\e})} \left( 1+O(\e^2)\right) \p_{\xi'_{j_0i_0}} \psi_1. \nonumber
\end{eqnarray}
Now we use that since 
\begin{equation}
\int_{\O_\e \setminus \bigcup_{j=1}^k B(\xi_j',\frac{\d}{\e})} \psi_1 =0 \ \ \ \ \forall \xi \text{ in } \O_\delta
\end{equation}
we have that 
\begin{equation}
\int_{\O_\e \setminus \bigcup_{j=1}^k B(\xi_j',\frac{\d}{\e})} \p_{\xi'_{j_0i_0}} \psi_1- \int_{B(\xi'_{j_0},\frac{\d}{\e})}\p_{z_{i_0}}\psi_1=0.
\end{equation}
By using that $\|\psi\|_* \leq C\e^{1-\s}$ we obtain
\begin{equation}
\e^2 \int_{\O_\e \setminus \bigcup_{j=1}^k B(\xi_j',\frac{\d}{\e})} |V_0|^2 \p_{\xi'_{j_0i_0}}\psi_1=O(\e^{2-\sigma}).
\end{equation}
We can thus say that
\begin{equation}
-\p_{\xi'_{j_0i_0}}P_\e(\xi)= c_0O(\e^{1-\s})+\sum_{l,j} c_{jl} \left(c^* \d_{jj_0}\d_{i_0 l} +O(\e^{1-\s}) \right).
\end{equation}

Now we recall that 
\begin{equation}
\e^2\int_{\O_\e \setminus \bigcup_{j=1}^k B(\xi_j',\frac{\d}{\e})} |v|^2 c_0= \sum_{j,l} c_{jl}\IM \int_{\{r_j<1\}} \overline{\phi_j}w_{x_l}
\end{equation}
ans since we can prove that $|v|=1+O(\e^2)$ we see that we have $\e^2\int_{\O_\e \setminus \bigcup_{j=1}^k B(\xi_j',\frac{\d}{\e})} |v|^2 $ is of order 1. Hence By using again that $\|\psi\|_* \leq C\e^{1-\s}$ we arrive at
\begin{eqnarray}
\p_{\xi'_{j_0i_0}} P_\e(\xi)&=&-\sum_{l,j} c_{jl} \left(c^* \d_{j j_0}\d_{l i_0} +O(\e^{1-\s}) \right)  \nonumber \\
&=& -c_{j_0i_0}c^*- \sum_{j,l} c_{j,l} O(\e^{1-\s}).
\end{eqnarray}
From this last equality we can deduce that if $\nabla _\xi P_\e(\xi)=0$ then $c_{jl}=0$ for all $j=1,...k$ $l=1,2$. Indeed by contradiction if we assume that $\nabla _\xi P_\e(v(\xi))=0$ and that there exists $c_{j_0l_0} \neq 0$ then we find that $|c_{j_0l_0}|=O(\sum_{j,l} |c_{jl}| \e^2)$ and by adding all the non-zero terms we arrive at $\sum_{jl}|c_{jl}|(1+O(\e^2))=0$ which is a contradiction.
 
To prove point b) we remark that from Proposition \ref{Prop4.1} 
\begin{equation}
c^*c_{jl}=-\RE \int_{\{ |z|<\frac{\d}{\e} \}} iw (N(\psi)+R) \overline{w}_{x_l} +O(\e^{2-\s}\log \e).
\end{equation}
But 
\[\int_{\{ |z|<\frac{\d}{\e} \}} iw N(\psi) \overline{w}_{x_l} = \int_{\{ |z|<2 \}}i N(\psi) \overline{w}_{x_l} +\int_{\{ 2<|z|< \frac{\d}{\e} \}}i N(\psi) \overline{w}_{x_l}\]
Hence by using that $\|N(\psi)\|_{**} \leq C\e^{2-2\s}$ the definition of the norm-** and the fact that $|w_{x_l}|\leq C\frac{1}{|z|}$ for $|z|>2$ we obtain: 
\[ \RE \int_{\{ |z|<\frac{\d}{\e} \}} iw N(\psi)\overline{w}_{x_l}  =O(\e^{2-\s}) .\]
Now since $w=\frac{V_0}{\a_j}$ we have that 
\begin{equation}
\RE \int_{\{|z|<\frac{\d}{\e}\}} iw R \overline{w}_{x_l} =\RE \int_{\{|z|<\frac{\d}{\e}\}} \frac{E^j}{\alpha_j}\overline{w}_{x_l}
\end{equation}
with $E^j= \Delta V_0^j+(1-|V_0^j|^2)V_0^j$ and $V_0^j=V_0(\xi_j'+z)$.  We also have that 
\begin{equation}
\p_{x_l} w= \frac{\p_{x_l} V_0^j}{\a_j}-\frac{V_0^j \p_{x_l} \a_j}{\a_j^2}.
\end{equation}
Now we have that $\p_{x_l}V_0^j= \p_{\xi'_{jl}} V_0$ and 
\begin{equation}\nonumber
\RE \int_{\{|z|<\frac{\d}{\e}\}} iw R \overline{w}_{x_l}=\RE \int_{\{|z|<\frac{\d}{\e}\}}\left( \Delta V_0+(1-|V_0|^2)V_0 \right)\p_{\xi'_{jl}}V_0 (1+O(\e^2)) +O(\e^{2-2\s}).
\end{equation}
To obtain the last equality we used that $|\a_j|^2=1+O(\e^2)$ along with the form and the estimates on $\alpha$ (cf.\ \eqref{defalpha3}, \eqref{estimateonA}, \eqref{estimateonB}) and $E=iV_0[R_1+iR_2]$ with estimates on $R_1$ and $R_2$. Now estimates on $E$ and on the gradient of $V_0$ give that 
\begin{eqnarray}
\RE \int_{\{|z|<\frac{\d}{\e}\}} \left( \Delta V_0+(1-|V_0|^2)V_0 \right)\p_{\xi'_{jl}}V_0 &=&\RE \int_{\O_\e}  \left( \Delta V_0+(1-|V_0|^2)V_0 \right)\p_{\xi'_{jl}}V_0 +O(\e^{2-\s}) \nonumber \\
&=&-\p_{\xi'_{jl}} E_\e(V_0)+O(\e^{2-\s}).
\end{eqnarray}
We thus obtain that 
\[c_{jl}c^*= \p_{\xi'_{jl}} E_\e(V_0)+O(\e^{2-\s}\log \e).\]
This proves that 
\[\p_{\xi'_{jl}} P_\e(\xi)=\p_{x'_{jl}} E_\e(V_0)+O(\e^{2-\s}\log \e) \]
but according to \eqref{renormalizede} 
\[ \p_{\xi'_{jl}} E_\e(V_0) =\e \p_{\xi_{jl}} W_N(\xi,d) +O(\e^{2-\s}) \]
hence
\begin{equation}
\nabla_\xi P_\e(\xi)=\p_{\xi_{jl}} W_N(\xi,d)+O(\e^{1-\s}\log \e).
\end{equation}
\end{proof}
Once we have this Proposition the proof of Theorem \ref{theorem1} and Theorem \ref{theorem2} follows exactly as in \cite{delPinoKowalczykMusso2006} and we refer the reader to this article for a detailed proof.\\

\textbf{Acknowledgments:} I would like to thank Juan D\'avila for his help and for many useful conversations about this work. I am also grateful to Monica Musso for valuable comments. Many thanks also to Xavier Lamy for interesting conversations about this work. This work has been supported by the Millennium Nucleus Center for Analysis of PDE NC130017 of the Chilean Ministry of Economy. 

\section*{Appendix}
We recall the following useful elliptic estimates (see \cite{BBH0}) 
\begin{proposition}\label{EE1}
Let $u$  be a solution of 
\[ -\Delta u= f \ \text{ in } \O. \]
Then 
\begin{equation}
|\nabla u(x)|^2 \leq C\left( \|f\|_{L^\infty(\O)} \|u\|_{L^\infty(\O)}+\frac{1}{\text{dist}^2(x,\p \O)\|u\|_{L^\infty(\O)}^2} \right) \ \text{ for all } x \text{ in } \O
\end{equation}
where $C$ is some constant depending only on $N$ with $\O \subset \R^N$.
\end{proposition}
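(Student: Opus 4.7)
\medskip

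\noindent\textbf{Proof proposal.} The statement as written appears to contain a typographical slip: the second term should read $\|u\|_{L^\infty(\O)}^2/\dist^2(x,\p\O)$ rather than $1/(\dist^2(x,\p\O)\|u\|_{L^\infty(\O)}^2)$, since otherwise the right-hand side is not homogeneous in $u$ and is vacuous when $u\equiv 0$. I will prove the corrected statement, which is the classical interior gradient estimate of Bethuel--Brezis--H\'elein; the strategy is a two-step scaling argument.

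\smallskip

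\noindent\emph{Step 1 (rescaling to the unit ball).} Fix $x_0\in\O$ and set $d:=\dist(x_0,\p\O)$, so $B_d(x_0)\subset\O$. For a parameter $0<R\leq d$ to be chosen later, define
\[
v(y)\;:=\;u(x_0+Ry),\qquad y\in B_1(0).
\]
A direct computation shows $-\Delta v(y)=R^2 f(x_0+Ry)=:g(y)$ on $B_1$, with $\|v\|_{L^\infty(B_1)}\leq\|u\|_{L^\infty(\O)}$ and $\|g\|_{L^\infty(B_1)}\leq R^2\|f\|_{L^\infty(\O)}$. The estimate $|\nabla u(x_0)|=R^{-1}|\nabla v(0)|$ reduces everything to an interior gradient bound at the origin of the unit ball.

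\smallskip

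\noindent\emph{Step 2 (interior gradient estimate on $B_1$).} Invoke the standard fact that any $v\in C^2(B_1)$ solving $-\Delta v=g$ in $B_1$ satisfies
\[
|\nabla v(0)|\;\leq\;C(N)\bigl(\|v\|_{L^\infty(B_1)}+\|g\|_{L^\infty(B_1)}\bigr).
\]
This is proved by writing $v=v_1+v_2$, where $v_2$ is the Newton potential of $g$ (bounded with bounded gradient by $\|g\|_{L^\infty}$) and $v_1$ is harmonic on, say, $B_{3/4}$; one then uses the Poisson representation of $v_1$ (or equivalently the mean-value property for $\nabla v_1$) to dominate $|\nabla v_1(0)|$ by $\|v_1\|_{L^\infty(B_{3/4})}$. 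Plugging this back into the scaled problem and multiplying by $R^{-1}$,
\[
|\nabla u(x_0)|\;\leq\;C\Bigl(\tfrac{\|u\|_{L^\infty(\O)}}{R}+R\,\|f\|_{L^\infty(\O)}\Bigr).
\]

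\smallskip

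\noindent\emph{Step 3 (optimizing $R$).} Squaring yields $|\nabla u(x_0)|^2\leq C\bigl(\|u\|_\infty^2/R^2+R^2\|f\|_\infty^2\bigr)$. Choose
\[
R\;=\;\min\!\Bigl(d,\ \bigl(\|u\|_{L^\infty(\O)}/\|f\|_{L^\infty(\O)}\bigr)^{1/2}\Bigr),
\]
with the convention that the second argument is $+\infty$ when $f\equiv0$. Both cases of the minimum produce the desired estimate:
\[
|\nabla u(x_0)|^2\;\leq\;C\Bigl(\|u\|_{L^\infty(\O)}\|f\|_{L^\infty(\O)}+\tfrac{\|u\|_{L^\infty(\O)}^2}{d^2}\Bigr).
\]

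\smallskip

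\noindent\emph{Main obstacle.} The proof is essentially a black-box application of a classical interior elliptic estimate combined with an optimal choice of scale, so no real obstacle arises. The only point requiring genuine care is recognizing that one should not apply the interior estimate directly on $B_d(x_0)$ (which would give only $|\nabla u|^2\leq C(\|u\|_\infty^2/d^2+d^2\|f\|_\infty^2)$) but instead on $B_R$ for a free parameter $R\leq d$ and then optimize; this is what converts the weaker estimate into the mixed-norm bound $\|u\|_\infty\|f\|_\infty$ that is needed later.
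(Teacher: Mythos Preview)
Your proof is correct, and your observation about the typographical slip in the second term is well taken. Note, however, that the paper does not actually prove this proposition: it is stated in the Appendix as a recalled fact with a citation to \cite{BBH0}, so there is no ``paper's own proof'' to compare against. The scaling-and-optimization argument you give is precisely the standard one found in that reference.
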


\begin{proposition}\label{EEE2}
Let $u$  be a solution of 
\begin{equation}
\left\{
\begin{array}{rcll}
-\Delta u &=& f \ \text{ in } \O \\
u&=&0 \ \text{ on } \p\O.
\end{array}
\right.
\end{equation} 
Then 
\begin{equation}
||\nabla u(x)\|_{L^\infty(\O)}^2 \leq C \|f\|_{L^\infty(\O)} \|u\|_{L^\infty(\O)}
\end{equation}
where $C$ is some constant depending only on $\O$.
\end{proposition}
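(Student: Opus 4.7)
The plan is to reduce the estimate to Proposition~\ref{EE1} via the maximum principle and a local boundary straightening plus odd reflection. First, the maximum principle for $-\Delta$ on the smooth bounded domain $\O$ gives $\|u\|_{L^\infty(\O)} \leq C_\O \|f\|_{L^\infty(\O)}$, so $\|u\|_\infty^2 \leq C_\O \|f\|_\infty \|u\|_\infty$. It therefore suffices to prove the slightly weaker-looking pointwise bound
\[
|\nabla u(x)|^2 \leq C \bigl( \|f\|_\infty \|u\|_\infty + \|u\|_\infty^2 \bigr) \qquad \text{for every } x \in \O,
\]
since the second term on the right is then absorbed into the first.

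Fix $r_0>0$ small, depending only on $\O$ (for instance, smaller than the reach of $\p\O$). For $x$ with $\dist(x,\p\O)\geq r_0$, Proposition~\ref{EE1} applied in $\O$ yields the bound at once, with constant $C=C(\O,r_0)$. For $x$ with $\dist(x,\p\O)<r_0$, let $y_0\in\p\O$ be a nearest point. Because $\p\O$ is smooth and compact, there is a $C^2$ diffeomorphism $\Phi$ (boundary-normal/Fermi coordinates) mapping a fixed neighborhood of $y_0$ in $\overline\O$ onto the closed upper half-ball $\overline{B^+}$ and $\p\O$ onto the flat face $\{z_N=0\}$, with uniformly bounded Jacobian and inverse. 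The function $\tilde u = u\circ\Phi^{-1}$ vanishes on the flat face and solves a uniformly elliptic equation in divergence form with smooth coefficients depending only on $\O$. Since boundary-normal coordinates make the principal coefficient block-diagonal on $\{z_N=0\}$, extending $\tilde u$ by odd reflection in $z_N$ produces a function $\bar u$ on the full ball $B$ satisfying a uniformly elliptic equation $\Dive(\bar A \nabla \bar u)=\bar f$ with bounded measurable coefficients, $\|\bar u\|_\infty \leq \|u\|_\infty$ and $\|\bar f\|_\infty = O(\|f\|_\infty)$. Applying the variable-coefficient analogue of Proposition~\ref{EE1} at $\Phi(x)$, whose distance to $\p B$ is bounded below by a constant depending only on $\O$, gives
\[
|\nabla\bar u(\Phi(x))|^2 \leq C\bigl(\|f\|_\infty\|u\|_\infty+\|u\|_\infty^2\bigr).
\]
The bi-Lipschitz chain rule then transfers this bound to $|\nabla u(x)|^2$, completing the boundary case.

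The main obstacle is the boundary case: one has to choose the local coordinates so that the principal part has the correct parity in the normal variable, so that odd reflection yields a genuine uniformly elliptic PDE on the whole ball; this is precisely the purpose of the Fermi coordinates. One also needs an interior gradient estimate for such a variable-coefficient divergence-form operator that matches the quadratic right-hand side $\|\bar f\|_\infty \|\bar u\|_\infty + \|\bar u\|_\infty^2$ appearing in Proposition~\ref{EE1}. This can be obtained either by repeating the Bernstein-type argument behind Proposition~\ref{EE1} from \cite{BBH0} in the variable-coefficient setting, or by combining the interior $W^{2,p}$ Calder\'on--Zygmund estimate for some $p>2$ with the Morrey embedding $W^{2,p}\hookrightarrow C^1$ and the maximum principle. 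Once the reflected estimate is in hand, combining the interior and boundary cases with the maximum-principle reduction from the first paragraph yields the stated bound $\|\nabla u\|_{L^\infty(\O)}^2 \leq C \|f\|_{L^\infty(\O)} \|u\|_{L^\infty(\O)}$.
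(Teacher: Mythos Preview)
The paper does not prove this proposition: it is stated in the Appendix as a recalled elliptic estimate from \cite{BBH0}, with no argument given. So there is no ``paper's own proof'' to compare against; the paper simply imports the result.

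Your argument is a legitimate and standard route to such an estimate: interior points are handled directly by Proposition~\ref{EE1}, and boundary points are reduced to interior ones by Fermi coordinates plus odd reflection across the flattened boundary, after which the maximum-principle bound $\|u\|_\infty \leq C_\O\|f\|_\infty$ lets you absorb the $\|u\|_\infty^2$ term. Two comments. First, after reflection the coefficients of the divergence-form operator are only Lipschitz (the even extension of a smooth function has a kink at $z_N=0$), not smooth; this is still enough for the $W^{2,p}$ or Schauder estimates you invoke, but it is worth saying explicitly rather than calling them ``bounded measurable'', since for merely bounded measurable coefficients no Lipschitz bound on the solution is available in general. Second, you correctly flag that you need a variable-coefficient analogue of Proposition~\ref{EE1}; the cleanest way to close this is the local $W^{2,p}$ estimate (Lipschitz coefficients, $p>N$) followed by Morrey, applied on a ball of fixed radius centered at the reflected point, which yields exactly the scaling $\|\nabla \bar u\|_\infty \leq C(\|\bar f\|_\infty + \|\bar u\|_\infty)$ and hence, after squaring and using $\|u\|_\infty \leq C_\O\|f\|_\infty$, the desired product form.
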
 

\begin{proposition}\label{EE3}
Let $u$  be a solution of 
\begin{equation}
\left\{
\begin{array}{rcll}
-\Delta u &=& f \ \text{ in } \O \\
u&=&g \ \text{ on } \p\O,
\end{array}
\right.
\end{equation} 
with $g$ in $C^{1,\gamma}(\p \O)$. Then 
\begin{equation}
||\nabla u(x)\|_{L^\infty(\O)}^2 \leq C \left( \|f\|_{L^\infty(\O)} \|u\|_{L^\infty(\O)} +\|g\|^2_{C^{1,\gamma}(\p \O)} \right).
\end{equation}
where $C$ is some constant depending only on $\O$.
\end{proposition}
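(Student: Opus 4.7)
The plan is to reduce Proposition \ref{EE3} to Proposition \ref{EEE2} by a standard decomposition: split off the Dirichlet data through a harmonic extension. Concretely, I would write $u=v+w$ where $v$ solves
\[ -\Delta v = 0 \text{ in } \O, \qquad v = g \text{ on } \p\O, \]
and $w:=u-v$ solves
\[ -\Delta w = f \text{ in } \O, \qquad w = 0 \text{ on } \p\O. \]
This reduces the task to estimating $\nabla v$ and $\nabla w$ separately in $L^\infty$.

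For $v$, I would invoke classical Schauder boundary regularity for harmonic functions with $C^{1,\gamma}$ Dirichlet data: since $\p\O$ is smooth and $g\in C^{1,\gamma}(\p\O)$, one has $v\in C^{1,\gamma}(\overline{\O})$ with
\[ \|\nabla v\|_{L^\infty(\O)} \le \|v\|_{C^{1,\gamma}(\overline{\O})} \le C\|g\|_{C^{1,\gamma}(\p\O)}, \]
for a constant $C$ depending only on $\O$. This directly yields $\|\nabla v\|_{L^\infty}^2 \le C\|g\|_{C^{1,\gamma}(\p\O)}^2$, which is the second term on the right-hand side of the desired inequality.

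For $w$, I would apply Proposition \ref{EEE2}, which gives
\[ \|\nabla w\|_{L^\infty(\O)}^2 \le C\|f\|_{L^\infty(\O)} \|w\|_{L^\infty(\O)}. \]
To control $\|w\|_{L^\infty}$ in terms of $\|u\|_{L^\infty}$, I would use the maximum principle for the harmonic function $v$, which gives $\|v\|_{L^\infty(\O)} \le \|g\|_{L^\infty(\p\O)} = \|u\|_{L^\infty(\p\O)} \le \|u\|_{L^\infty(\O)}$. Hence $\|w\|_{L^\infty} \le \|u\|_{L^\infty}+\|v\|_{L^\infty} \le 2\|u\|_{L^\infty}$, which yields
\[ \|\nabla w\|_{L^\infty}^2 \le C \|f\|_{L^\infty(\O)}\|u\|_{L^\infty(\O)}. \]

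Combining the two bounds via $\|\nabla u\|_{L^\infty}^2 \le 2\|\nabla v\|_{L^\infty}^2 + 2\|\nabla w\|_{L^\infty}^2$ gives the claimed estimate. There is no real obstacle here: the only point to watch is that one must express $\|v\|_{L^\infty}$ through the boundary data (to avoid a spurious $\|g\|_{C^{1,\gamma}}$ factor multiplying $\|f\|_{L^\infty}$), and then note that $\|g\|_{L^\infty(\p\O)}\le\|u\|_{L^\infty(\O)}$ because $u=g$ on $\p\O$. The Schauder estimate for $v$ is the only ingredient beyond Proposition \ref{EEE2} and the maximum principle.
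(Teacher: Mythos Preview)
The paper does not actually prove Proposition~\ref{EE3}: it is simply recalled in the Appendix as a known elliptic estimate, with a reference. Your argument is correct and is the standard way to derive such an estimate from Proposition~\ref{EEE2}: split off the boundary data by a harmonic extension, control the harmonic part via Schauder estimates and the maximum principle, and apply Proposition~\ref{EEE2} to the remainder with homogeneous Dirichlet data.
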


\bibliographystyle{plain}
\bibliography{GLsemistiff}

\end{document}